\begin{document}

\title{The critical $Z$-invariant Ising model via dimers:\\ the periodic case}
\author{C\'edric Boutillier
\thanks{
{\small Laboratoire de Probabilit\'es et Mod\`eles Al\'eatoires, Universit\'e Paris VI Pierre et Marie Curie, Case courrier 188, 4 place Jussieu, F-75252 Paris CEDEX 05.} \small\texttt{cedric.boutillier@upmc.fr}. Supported in part by the Swiss National Foundation Grant 200020-120218/1.
}
, B\'eatrice de Tili\`ere
\thanks{
{\small Institut de Math\'ematiques, Universit\'e de Neuch\^atel, Rue Emile-Argand 11, CH-2007 Neuch\^atel.}
{\small\texttt{beatrice.detiliere@math.unizh.ch}}.
{\small Supported in part by the Swiss National Foundations grants 47102009 and 200020-120218/1.}
} }
\date{}
\maketitle
\vspace{-1cm}
\begin{abstract}

  We study a large class of critical two-dimensional Ising models namely {\em
  critical $Z$-invariant Ising models on periodic graphs}, example
  of which are the classical $\ZZ^2$, triangular and honeycomb
  lattice at the critical temperature. Fisher \cite{Fisher}
  introduced a correspondence between the Ising model and the dimer
  model on a decorated graph, thus setting dimer techniques as a
  powerful tool for understanding the Ising model. In this paper, we
  give a full description of the dimer model corresponding to the
  critical $Z$-invariant Ising model. We prove that the dimer
  characteristic polynomial is equal (up to a constant) to the
  critical Laplacian characteristic polynomial, and defines a
  Harnack curve of genus $0$. We prove an explicit expression for
  the free energy, and for the Gibbs measure obtained as weak limit
  of Boltzmann measures.
\end{abstract}

\section{Introduction}

In \cite{Fisher}, Fisher introduced a correspondence between the two-dimensional Ising model defined on a graph $G$, and the dimer model defined on a decorated version of this graph. Since then, dimer techniques have been a powerful tool for solving pertinent questions about the Ising model, see for example the paper of Kasteleyn \cite{Kasteleyn}, and the book of Mc Coy and Wu \cite{McCoyWu}. In this paper, we follow this approach to the Ising model.

We consider a large class of critical Ising models, known as {\em critical $Z$-invariant Ising models}, introduced in \cite{Baxter}, studied in \cite{Mercat,CostaSantos}. More precisely, we consider Ising models defined on graphs which have the property of having an {\em isoradial embedding}. We suppose that the Ising coupling constants naturally depend on the geometry of the embedded graph, and are such that the model is invariant under {\em star-triangle} transformations of the underlying graph, {\em i.e.} such that the Ising model is {\em $Z$-invariant}. We suppose moreover that the coupling constants are {\em critical} by imposing a {\em generalized self-duality property}. The standard Ising model on the square, triangular and honeycomb lattice at the critical temperature are examples of critical $Z$-invariant Ising models.

Let $G$ be an infinite $\ZZ^2$-periodic isoradial graph with critical coupling constants on the edges. Then, by Fisher the Ising model on $G$ is in correspondence with the dimer model on a decorated graph $\GD$, with a well chosen positive weight function $\nu$ on the edges. The decorated graph $\GD$ is also infinite and $\ZZ^2$-periodic. A natural exhaustion of the infinite graph $\GD$ by toroidal graphs is given by $\{\GD_n\}_{n\geq 1}$, where $\GD_n=\bigl(V(\GD_n),E(\GD_n)\bigr)= \GD / n\ZZ^2$. The dimer model on $\GD_n$ is defined as follows. A {\em dimer configuration} $M$ of $\GD_n$ is a subset of edges of $\GD_n$ such that every vertex is incident to exactly one edge of $M$. Let $\M(\GD_n)$ be the set of dimer configurations of $\GD_n$. The probability of occurrence of a dimer configuration $M$ is given by the {\em dimer Boltzmann measure}, denoted $\P_n$:
\begin{equation*}
\P_n(M)=\frac{ \prod_{e\in M}\nu_e}{\Z_n},
\end{equation*}
where $\Z_n=\sum_{M\in\M(\GD_n)}\prod_{e\in M}\nu_e$ is the {\em dimer partition function}. 

The first theorem of this paper proves an explicit expression for the free energy, and for the Gibbs measure obtained as weak limit of Boltzmann measures, of the dimer model on the infinite graph $\GD$. It can loosely be stated as follows, a precise statement is given in Theorem \ref{prop:periodic} of Section \ref{sec:periodic}. Let $K$ be a {\em Kasteleyn matrix} of the graph $\GD$. It is a weighted, oriented adjacency matrix of the graph $\GD$, defined in Section \ref{sec:Kmatrix}.
\begin{thm}$\,$\label{thm:thm1}
\begin{enumerate}
 \item The free energy of the dimer model on $\GD$ is:
\begin{equation*}
-\lim_{n\rightarrow\infty}\frac{1}{n^2}\log(\Z_n)=-\frac{1}{2(2\pi i)^2}\iint_{\TT^2}\log \det(\widehat{K}(z,w))\frac{\ud z}{z}\frac{\ud w}{w},
\end{equation*}
where $\widehat{K}(z,w)$ is the Fourier transform of the infinite Kasteleyn matrix $K$ (it is a matrix of size $V(\GD_1)\times V(\GD_1)$).
\item The weak limit of the Boltzmann measures $\P_n$ defines a translation invariant ergodic Gibbs measure $\P$. The probability of occurrence of a subset of edges $e_1=u_1 v_1,\cdots,e_m=u_m v_m$ of $\GD$, in a dimer configuration of $\GD$ chosen with respect to the Gibbs measure $\P$, is:
\begin{equation*}
\P(e_1,\cdots,e_m)=\left(\prod_{i=1}^m K_{u_i,v_i}\right)\Pf((K^{-1})_{e_1,\cdots,e_m}),
\end{equation*}
where $K^{-1}$ is the inverse of the infinite Kasteleyn matrix defined in \eqref{eq:Kinverse}, and
$(K^{-1})_{e_1,\cdots,e_m}$ is the submatrix of $K^{-1}$, whose lines and columns are indexed by the vertices defining the edges $e_1,\cdots,e_m$. 
\end{enumerate}
\end{thm}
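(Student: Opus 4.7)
The plan is to follow the Kasteleyn--Kenyon framework for periodic dimer models, adapted to the (non-bipartite) decorated graph $\GD$ coming from Fisher's correspondence. I would first establish Pfaffian formulas for the toroidal partition functions $\Z_n$, diagonalize via Fourier analysis of the $\ZZ^2$-action, and pass to the thermodynamic limit. For the Gibbs measure, I would combine Kenyon's finite-volume Pfaffian formula with pointwise convergence of the inverse Kasteleyn matrix, then check translation invariance and ergodicity of the limit.

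For the free energy, fix a Kasteleyn orientation of $\GD$ that is periodic under the $\ZZ^2$-action, and let $K_n$ be the associated antisymmetric Kasteleyn matrix on $\GD_n$. By the extension of Kasteleyn's theorem to the torus,
\begin{equation*}
\Z_n = \frac{1}{2}\sum_{(\theta,\tau)\in\{0,1\}^2} c_{\theta,\tau}\, \Pf(K_n^{\theta,\tau}),
\end{equation*}
with prescribed signs $c_{\theta,\tau}\in\{\pm 1\}$ and the sum running over the four boundary spin structures. Since $K_n^{\theta,\tau}$ commutes with the $\ZZ^2$-action on $\GD_n$, Fourier transformation block-diagonalizes it into blocks $\widehat{K}(z,w)$ of size $|V(\GD_1)|$, where $(z,w)$ ranges over pairs with $z^n=(-1)^\theta$, $w^n=(-1)^\tau$. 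Squaring gives $\det(K_n^{\theta,\tau})=\prod_{z,w}\det\widehat{K}(z,w)$, so a Riemann-sum argument applied to $\frac{1}{n^2}\log\det(K_n^{\theta,\tau})$ produces the integral in the statement. The proof is completed by showing that the four Pfaffians share the same exponential growth, so that $\log|\Z_n|/n^2$ converges to the same limit regardless of the combination of signs.

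For the Gibbs measure, Kenyon's finite-volume formula expresses the probability under $\P_n$ of any fixed set of edges as a signed combination (weighted by $c_{\theta,\tau}\Pf(K_n^{\theta,\tau})/(2\Z_n)$) of terms of the shape $\prod_i K_{u_iv_i}\cdot\Pf\bigl((K_n^{\theta,\tau})^{-1}_{e_1,\dots,e_m}\bigr)$. The heart of the argument is the pointwise convergence, for each fixed pair of vertices $x,y\in V(\GD)$,
\begin{equation*}
\lim_{n\to\infty}(K_n^{\theta,\tau})^{-1}_{x,y} = K^{-1}_{x,y} := \frac{1}{(2\pi i)^2}\iint_{\TT^2}\bigl(\widehat{K}(z,w)^{-1}\bigr)_{x,y}\,\frac{\ud z}{z}\frac{\ud w}{w},
\end{equation*}
which is again a Riemann-sum statement. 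Combined with the asymptotics of the four Pfaffians, the four signed terms coalesce into the single Pfaffian formula claimed for $\P$. Translation invariance of the limit is immediate from the $\ZZ^2$-equivariance of $K^{-1}$; ergodicity follows from decay of $K^{-1}_{x,y}$ as the distance between $x$ and $y$ grows, via a Riemann--Lebesgue argument applied to the integral representation.

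The main obstacle is the behavior at criticality, where $\det\widehat{K}(z,w)$ vanishes on the unit torus $\TT^2$. Because the spectral curve has been identified earlier in the paper as a genus-$0$ Harnack curve, its zero locus on $\TT^2$ consists of isolated real nodes, and the analysis requires: (i) integrability of $\log|\det\widehat{K}|$ near these nodes so that the free-energy integral converges; (ii) local integrability of $\widehat{K}^{-1}$ so that $K^{-1}_{x,y}$ is well defined and decays at infinity; and (iii) uniform-in-$n$ bounds on the entries of $(K_n^{\theta,\tau})^{-1}$ close to the bad $(z,w)$ in order to justify the passage from Riemann sums to the integral. A secondary technical point is to check that the signs $c_{\theta,\tau}$ align so that the signed combination of Pfaffians is genuinely dominated by its largest term, with no asymptotic cancellation.
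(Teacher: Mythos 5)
Your outline reproduces the paper's strategy for Part 1 (four Pfaffians, Fourier block-diagonalization, Riemann sums, and the sandwich $\max_{\theta,\tau}\Pf(K_n^{\theta\tau})\leq \Z_n\leq 2\max_{\theta,\tau}\Pf(K_n^{\theta\tau})$), and that part goes through — although your phrase ``the four Pfaffians share the same exponential growth'' is literally false, since $\det K_n^{00}=\prod_{j,k}P(e^{2ij\pi/n},e^{2ik\pi/n})=0$ for every $n$ (the point $(1,1)$ is always among the evaluation points and $P(1,1)=0$), so $\Pf(K_n^{00})=0$; only the sandwich bound, not equal growth of all four terms, is what closes Part 1.

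For Part 2 there is a genuine gap. You propose to write each of the four terms of the finite-volume formula \eqref{eq:bolztorus} as $\prod_i K_{u_iv_i}\,\Pf\bigl((K_n^{\theta\tau})^{-1}\bigr)_E\,\Pf(K_n^{\theta\tau})/(2\Z_n)$ and pass to the limit by Riemann sums. This works only for $(\theta,\tau)\neq(0,0)$: the matrix $K_n^{00}$ is singular for \emph{every} $n$ (not merely nearly singular), so $(K_n^{00})^{-1}$ does not exist and Jacobi's identity cannot be applied to that term. Moreover, while $\Pf(K_n^{00})=0$, the minor $\Pf(K_n^{00})_{E^c}$ has no reason to vanish, so the $(0,0)$ contribution to \eqref{eq:bolztorus} is not trivially zero; one must prove directly that $\Pf(K_n^{00})_{E^c}/\Z_n\to 0$, and this is precisely the new difficulty of the non-bipartite setting (in the bipartite case of Kenyon--Okounkov--Sheffield this issue is bypassed via Sheffield's theorem, which is unavailable here). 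The paper's proof devotes Lemma \ref{lem:firstterm1} to this: after splitting off the $2q$-dimensional kernel of $\widehat{K}(1,1)$ one shows $\Pf(K_n^{00})_{E^c}/\sqrt{\det\bar\Bs}=O(n^{-2q})$ by Cauchy--Binet and Hadamard bounds, and $\sqrt{\det\bar\Bs}/\Z_n=O(n)$ by comparing $\det\bar\Bs$ with $\det K_n^{10}$ through a delicate analysis of $\log|P|$ near the node $(1,1)$ (a symmetry/cancellation argument for the quadratic form governing the double zero), giving \eqref{eq:firstterm}. Your ``secondary technical point'' about signs and absence of asymptotic cancellation does not address this; without an estimate of the type \eqref{eq:firstterm}, the three convergent terms cannot be shown to ``coalesce'' into the single Pfaffian formula, and the proof of Part 2 is incomplete.
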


The statement of Theorem \ref{thm:thm1} is similar to Theorems 3.5 and 4.3 of \cite{KOS}, see also \cite{CKP}, obtained for dimer models defined on infinite $\ZZ^2$-periodic {\em bipartite} graphs. These results do not apply as such to our setting, where the graph $\GD$ on which our dimer model is defined, is {\em not} bipartite. Here is a description of the additional problems we had to solve when dealing with a non-bipartite graph.
\begin{enumerate}
 \item A key requirement in proving such a theorem is to have a precise description of the zeros of $\det(\widehat{K}(z,w))$. In the bipartite case, Kenyon, Okounkov and Sheffield, prove that this zero set is a Harnack curve, which implies that there are at most two zeros on $\TT^2$. Quite surprisingly, we also prove that in our case, the zero set is a Harnack curve, by relating it to the zero set of the Laplacian. This is the content of Theorem \ref{thm:thm2} below.
\item In order to prove weak convergence of the Boltzmann measures, Kenyon, Okounkov and Sheffield prove weak convergence of the measures on a sub-sequence of $n$'s, which is quite easy once a description of the zero set of $\det(\widehat{K}(z,w)))$ is available, and use the convergence theorem for Gibbs measures of Sheffield \cite{Sheffield} to deduce convergence for every $n$. When the graph is not bipartite, Sheffield's theorem does not hold. We give a direct argument proving weak convergence of the Boltzmann measures for every $n$. Our techniques are general and robust, so that they could be used for dealing with dimer models on other non bipartite graphs.
\end{enumerate}

The description of the zero set of $\det(\widehat{K}(z,w))$ is given by Theorem \ref{thm:thm2} below. An equivalent statement is given in Theorem \ref{thm:P_Harnack} of Section~\ref{sec:P_Harnack}, where it is also proved. Let $\Delta$ be the Laplacian matrix of the infinite $\ZZ^2$-periodic isoradial graph $G$, and let $\widehat{\Delta}(z,w)$ be its Fourier transform. 
\begin{thm}
  \label{thm:thm2}
There exists a constant $c\neq 0$, depending on the isoradial embedding of the graph $G$, such that: 
\begin{equation*}
\det(\widehat{K}(z,w))=c \det(\widehat{\Delta}(z,w)),
\end{equation*}
and $\{(z,w)\in \CC^2\ :\ \det(\widehat{K}(z,w))=0\}$ is a Harnack curve of genus $0$. Its unique  point on $\TT^2$ is $(1,1)$, and it has multiplicity $2$.
\end{thm}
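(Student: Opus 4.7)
The plan is to establish the algebraic identity $\det(\widehat{K}(z,w)) = c\,\det(\widehat{\Delta}(z,w))$ first, and then to leverage properties of the critical Laplacian spectral curve to deduce the Harnack and genus-zero statements. This order is natural because, once the identity is in hand, all geometric questions about the zero locus of $\det(\widehat{K})$ transfer to the Laplacian, whose symbol has an explicit form in terms of the rhombus angles of the isoradial embedding.

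For the identity, I would exploit the fact that Fisher's decoration is a purely local construction: each vertex $v$ of $G_1$ is replaced by a small internal gadget, and the edges of $G_1$ reappear as a sparse set of inter-gadget edges in $\GD_1$. Consequently $\widehat{K}(z,w)$ admits a block decomposition in which the $(z,w)$-dependence is confined to the off-diagonal blocks coming from edges crossing the fundamental domain boundary, while the diagonal blocks—the Kasteleyn matrices of the individual gadgets at each vertex of $G_1$—are constant invertible matrices. A Schur complement reduction eliminating the internal decoration vertices then produces an effective matrix on the ``original'' vertices of $G_1$; at the critical $Z$-invariant weights I expect this effective matrix to coincide, up to a $(z,w)$-independent diagonal gauge and a global scalar, with $\widehat{\Delta}(z,w)$. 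This would reflect the discrete Dirac/fermion structure of the critical Ising model on isoradial graphs, in which the Kasteleyn operator and the critical Laplacian are known to be closely linked through discrete holomorphicity.

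Given the identity, the zero locus analysis reduces to the critical Laplacian. The point $(1,1)$ lies on the curve because the constant function is harmonic and so belongs to $\ker \widehat{\Delta}(1,1)$. Expanding $\widehat{\Delta}$ around $(1,1)$ in the variables $\log z$ and $\log w$, the leading terms should reproduce a positive-definite quadratic form (an anisotropically rescaled continuous Laplacian symbol), forcing the vanishing order at $(1,1)$ to be exactly two. For the Harnack and genus-zero properties, I would write down an explicit rational parameterization of the curve in terms of the rhombus angles, using Kenyon's formulas for the critical Laplacian weights; alternatively, one can check the Harnack conditions directly from the amoeba of the polynomial. Harnack combined with genus zero bounds the total multiplicity of torus intersections by two, so the double point at $(1,1)$ accounts for all of them.

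The main obstacle is the identity itself. Kasteleyn signs and orientations are delicate, and the effective matrix produced by the Schur complement could a priori differ from $\widehat{\Delta}$ by a nontrivial gauge or overall sign; checking that the result is \emph{exactly} a scalar multiple requires a careful case analysis of Fisher's decoration, of the precise critical weights $\nu_e$ inherited from the rhombus angles, and of the Kasteleyn orientation on $\GD$. A secondary difficulty is that the standard Harnack argument of Kenyon--Okounkov--Sheffield uses bipartiteness and does not apply here directly, which is precisely why routing the analysis through the Laplacian via the identity is the key simplification.
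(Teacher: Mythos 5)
Your proposed proof of the central identity $\det\widehat{K}(z,w)=c\,\det\widehat{\Delta}(z,w)$ rests on a Schur complement reduction that does not go through. In the Fisher graph every vertex of $G$ of degree $k$ is \emph{replaced} by a decoration of $3k$ vertices, so there is no copy of the original vertex left in $\GD_1$: ``an effective matrix on the original vertices of $G_1$'' is not defined, and any elimination must land on some ad hoc subset of decoration vertices whose cardinality (for instance one vertex per long-edge endpoint, i.e. $2|E(G_1)|$) does not match $|V(G_1)|$. Worse, the diagonal blocks you propose to invert --- the Kasteleyn matrices of the isolated decorations --- are skew-symmetric of size $3k$, hence singular whenever $k$ is odd (already for the honeycomb lattice, $k=3$), so the reduction is not even well defined in general. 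And even where an elimination makes sense, the proposal offers no mechanism forcing the reduced matrix to agree with $\widehat{\Delta}$ up to a constant diagonal gauge; this is precisely the hard point. The paper does not obtain the identity by any such local algebra: it first proves that $P_\Delta$ divides $P$ by parametrizing the Laplacian spectral curve through the discrete exponentials $(z(\lambda),w(\lambda))$ of \eqref{eq:paramPdelta} and exhibiting, for every $\lambda$, a $(z(\lambda),w(\lambda))$-quasiperiodic function in the kernel of $K$ on the Fisher graph, and it then pins down the quotient by a combinatorial Newton polygon argument: every monomial of $\det\widehat{K}$ is shown, after pairing and cancellation of cycle configurations around the decorations, to yield non-intersecting non-trivial loops on $G_1$, which are grown into cycle-rooted spanning forests contributing the same degree to $P_\Delta$, whence $N(P)\subset N(P_\Delta)$.

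The geometric half is also not self-contained as you set it up, and the logical order matters. In the paper, Harnackness, genus $0$ and irreducibility of the Laplacian curve are established \emph{first} --- by factorizing $\Delta=M^\dagger M$, relating $\det\widehat{\Delta^G}$ and $\det\widehat{\Delta^{G^*}}$ through the cycle-rooted spanning forest expansion, identifying $P_\Delta$ up to a constant with the characteristic polynomial of the bipartite isoradial dimer model on the double $G_d$, and invoking Kenyon--Okounkov --- and irreducibility is then an essential input to the divisibility step (a parametrized family of common zeros forces divisibility only because the curve is irreducible). Your plan inverts this order and replaces the Kenyon--Okounkov step by ``an explicit rational parametrization'' or ``checking the Harnack conditions directly from the amoeba'': a parametrization does give genus $0$, but Harnackness (the amoeba map being at most $2$-to-$1$) does not follow from it without a genuine argument, and none is indicated. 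The local statement at $(1,1)$ (constants in $\ker\widehat{\Delta}(1,1)$ and vanishing of order exactly two) is fine, but it is the smallest part of the theorem; as it stands, both the identity and the Harnack/genus-$0$ claim remain unproved in your outline.
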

The main steps of the proof are:
\begin{enumerate}
\item Show that the zero set of $\det(\widehat{\Delta}(z,w))$ is the spectral curve of the dimer model on an isoradial bipartite graph. The main tool to do this is an interpretation of $\det \widehat{\Delta}(z,w)$ in terms of \emph{cycle-rooted spanning forests}, which is an analog of Kirchhoff's matrix-tree theorem~\cite{Kirchhoff:1847}. Then use the result of Kenyon and Okounkov \cite{KO} by which spectral curves of dimer models on bipartite isoradial graphs are Harnack curves of genus $0$.
\item Show that $\det(\widehat{K}(z,w))=c \det(\widehat{\Delta}(z,w))$. First prove that $\det(\widehat{K}(z,w))$ divides $\det(\widehat{\Delta}(z,w))$ by using an explicit parametrization of the zero set of $\det(\widehat{\Delta}(z,w))$, then use the interpretation of $\det(\widehat{\Delta}(z,w))$ as cycle-rooted spanning forests to show inclusion of the Newton polygons.
\end{enumerate}

{\bf Outline of the paper}
\begin{description}
\item[Section $2$:] Definition of the critical $Z$-invariant Ising model.
\item[Section $3$:] Fisher's correspondence between the Ising and dimer models.
\item[Section $4$:] Study of the critical dimer model corresponding to a critical $Z$-invariant Ising model on a $\ZZ^2$-periodic graph. Definition of the infinite Kasteleyn matrix~$K$. Proposition \ref{prop:uniqueness} proves the existence and uniqueness of the inverse of the matrix~$K$. Statement and proof of Theorem \ref{prop:periodic}.
\item[Section $5$:] Statement and proof of Theorem \ref{thm:P_Harnack}.
\end{description}

\section{The critical $Z$-invariant Ising model}

Consider an unoriented finite graph $G=(V(G),E(G))$, together with a collection of positive real numbers $J=(J_e)_{e\in E(G)}$ indexed by the edges of $G$. The {\em Ising model on $G$ with coupling constants $J$}, is defined as follows.

A \emph{spin configuration} $\sigma$ of $G$ is a function of the vertices of $G$ with values in $\{-1,+1\}$. The probability of occurrence of a spin configuration $\sigma$ is given by the {\em Ising Boltzmann measure}, denoted $P^J$:
\begin{equation*}
P^J(\sigma)=\frac{1}{Z^J}\exp\biggl(\sum_{e=uv\in E(G)}J_e\sigma_u\sigma_v\biggr),
\end{equation*}
where 
\begin{equation*}
 Z^J=\sum_{\sigma\in\{-1,1\}^{V(G)}}\exp\biggl(\sum_{e=uv\in E(G)}J_e\sigma_u\sigma_v\biggr),
\end{equation*}
is the {\em Ising partition function}. 

\subsection{The $Z$-invariant Ising Model}

$Z$-invariant Ising models are Ising models defined on a class of embedded graphs which have the property of being {\em isoradial}. The corresponding coupling constants are naturally related to geometric features of these graphs, and satisfy the {\em star-triangle relation} defined below.

\subsubsection{Isoradial graphs}

A graph $G$ is said to be {\em isoradial}~\cite{Kenyon3,KeSchlenk}, if it has an embedding 
in the plane such that every face is inscribed in a circle of radius~1, and
all circumcenters of the faces are in the closure of the faces. From now on, when we speak 
of the graph
$G$, we mean the graph together with a particular isoradial embedding in the
plane. Examples of isoradial graphs are given in Figure \ref{fig:isingcrit} below.

\begin{figure}[ht]
  \begin{center}
    \includegraphics[width=\linewidth]{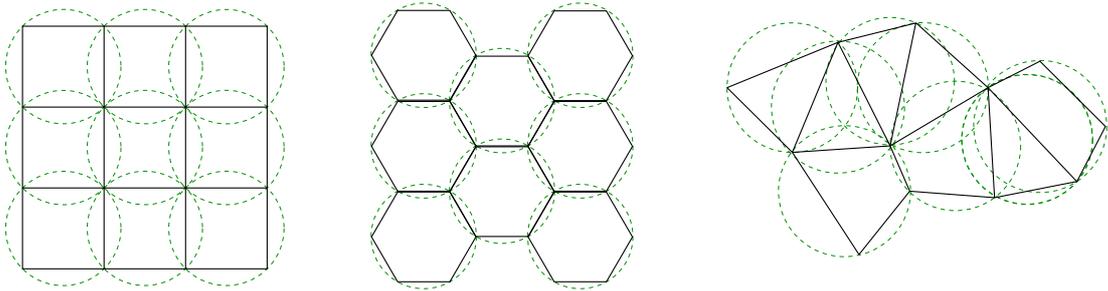}
    \caption{Examples of isoradial graphs: the square lattice (left), the honeycomb lattice (center), and a more generic one (right). Every face is inscribed in a circle of radius 1, represented in dashed lines.}
    \label{fig:isingcrit}
  \end{center}
\end{figure}

To such a graph is naturally associated the {\em diamond graph}, denoted by $\GR$, defined as follows. Vertices of $\GR$ consist in the vertices
of $G$, and the circumcenters of the faces of $G$. The circumcenter of each
face is then joined to all vertices which are on the boundary of this face, see
Figure \ref{fig:iso-rhombi}. Since $G$ is isoradial,
all faces of $\GR$ are side-length-$1$ rhombi. Moreover, each edge $e$ of
$G$ is the diagonal of exactly one rhombus of $\GR$; we let $\theta_e$ be the
half angle of the rhombus at the vertex it has in common with $e$, see Figure \ref{fig:iso-rhombi} (right).

\begin{figure}[h]
  \begin{center}
    \includegraphics[width=\linewidth]{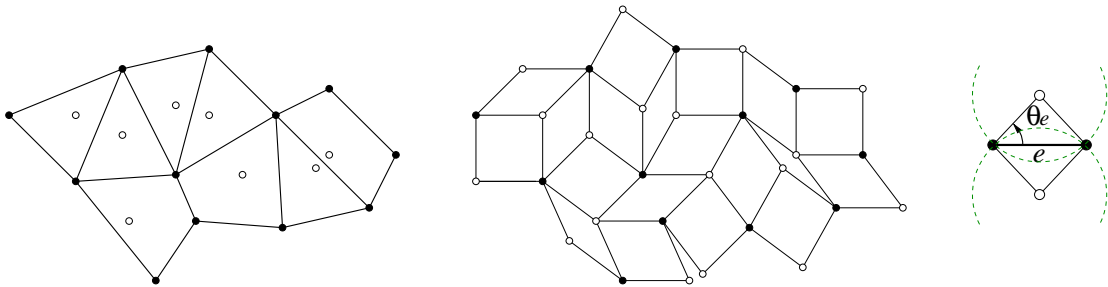}
    \caption{An isoradial graph (left). The white dots are the circumcenters of the faces, that are also the vertices of the dual $G^*$. Its diamond graph is represented in the center. On the right is the half-rhombus angle $\theta_e$ assigned to an edge $e$.}
    \label{fig:iso-rhombi}
  \end{center}
\end{figure}

The same construction can be done for \emph{toroidal isoradial graphs} in which case the embedding is on a torus.

\subsubsection{Star-triangle transformation}

In this section, we suppose that $G$ is a toroidal isoradial graph. It is then natural to choose the coupling constants $J$ of the Ising model on $G$, to depend on the geometry of the embedded graph: let us assume that $J$ is a function of $\theta_e$, the rhombus half-angle assigned to the edge $e$. 

The {\em star-triangle transformation} is a local operation on the graph $G$ which preserves isoradiality.
If $G$ has a vertex of degree $3$ (star), it can be removed and replaced by a triangle reconnecting its neighbors, see Figure \ref{fig:star-triangle}. The Ising model is said to be \emph{$Z$-invariant} if the Boltzmann probability of any event not involving the spin of the degree-3 vertex is preserved under this operation.

\begin{figure}[ht]
  \begin{center}
    \includegraphics[height=2.8cm]{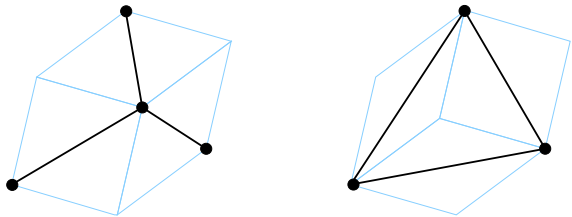}
    \caption{The star-triangle transformation corresponds to a rearrangement of three rhombi of the diamond graph. Isoradiality is preserved by this transformation.}
    \label{fig:star-triangle}
  \end{center}
\end{figure}

Imposing this condition is a strong constraint. Indeed, there is only a one parameter family of coupling constants which defines a $Z$-invariant Ising model: $J(\theta_e)$ must satisfy the equation:
\begin{equation}\label{eq:iso_weight}
  \sinh(2 J(\theta_e))= \frac{\mathrm{sn}\bigl(\frac{2K(k)}{\pi}\theta_e\big|k\bigr)}{\mathrm{cn}\bigl(\frac{2K(k)}{\pi}\theta_e\big|k\bigr)},\quad k^2\in\RR.
\end{equation}
The functions $\mathrm{sn}$, $\mathrm{cn}$ are Jacobi elliptic sine and cosine\footnote{ See \cite{GradsteinRyshik}, Section 6.1 for definitions and properties of these functions.}, and $K$ is the complete integral of the first kind:
\begin{equation*}
  K(k)=\int_{0}^{\pi/2} \frac{\mathrm{d}\varphi}{\sqrt{1-k^2 \sin^2\varphi}}.
\end{equation*}

A detailed derivation of this parametrization is given in \cite{Baxter}, Chap. 6.

\subsection{Polygonal contour expansions of the Ising partition function}

Let $G$ be a toroidal graph, with coupling constants $J$, not necessarily isoradial. There are two ways to define a probability measure on polygonal contours from the Ising model on $G$: 
the so-called \emph{low-temperature} and \emph{high-temperature}
expansions of the partition function. 

The \emph{high temperature expansion} is based on the following identity:
\begin{equation*}
 e^{J_e\sigma_u\sigma_v}=\cosh(J_e)\left(1+\tanh(J_e)\sigma_u\sigma_v\right).
\end{equation*}

The partition function can be rewritten as
\begin{align}\label{eq:Zhightemp}
 Z^J&=\biggl(\prod_{e\in E(G)}\cosh(J_e)\biggr)
\sum_{\sigma\in\{-1,1\}^{V(G)}} \prod_{e=uv\in E(G)}\left(1+\sigma_u\sigma_v \tanh(J_e)\right)\nonumber\\
&=\biggl(\prod_{e\in E(G)}\cosh(J_e)\biggr)2^{|V(G)|}\sum_{\C\in\mathcal{P}} \prod_{e\in\C} \tanh(J_e),
\end{align}
where $\mathcal{P}$ is the family of all polygonal contours drawn on $G$, for
which every edge of $G$ is used at most once. 

The {\em low-temperature expansion} gives a probability measure on polygonal
contours on the dual graph $G^*$ of $G$, which represent the separations between the clusters of $+$ spins and those of $-$ spins. This expansion is based on the following identity:
\begin{equation*}
 e^{ J_e\sigma_u\sigma_v}=e^{J_e}\left(1+\frac{1-\sigma_u\sigma_v}{2}e^{-2J_e}\right).
\end{equation*}
The partition function is thus equal to
\begin{align}\label{eq:Zlowtemp}
 Z^J&=\biggl(\prod_{e\in E(G)}
   \exp(J_e)\biggr)\sum_{\sigma\in\{-1,1\}^{V(G)}} \prod_{e=uv\in E(G)}\biggl(1+\frac{1-\sigma_u\sigma_v}{2}e^{-2J_e}\biggr)\nonumber\\
&=\biggl(\prod_{e\in E(G)} \exp(J_e)\biggr)\sum_{\C^*\in\mathcal{P^*}} \prod_{e^*\in\C^*}\exp(-2J_e),
\end{align}
where $\mathcal{P}^*$ is the family of polygonal contours drawn on $G^*$, 
for which every edge of $G^*$ is used at most once.

In both cases, the probability of a polygonal contour configuration is defined to be proportional to the product of the weights of the edges it contains. The weight of an edge $e$ is $\tanh(J_e)$ in the high temperature expansion, and $\exp(-2J_{e^*})$ in the low temperature expansion.

\subsection{The critical $Z$-invariant Ising model\label{sec:critZinv}}

Consider a $Z$-invariant Ising model on a toroidal isoradial graph $G$, and a $Z$-invariant Ising model on its dual graph $G^*$. Let $J$ and $J^*$ be the associated coupling constants satisfying \eqref{eq:iso_weight} for some parameters $k$ and $k^*$ respectively. Then, if $k^*=\frac{i k}{\sqrt{1-k^2}}$, the measure on polygonal contours of $G$ obtained from the high temperature expansion on $G$, is the same as the measure on polygonal contours of $G$ obtained from the low temperature expansion on $G^*$: there is a duality relation between $Z$-invariant Ising models with parameters $k$ and $k^*$. It differs from the classical one on the square lattice~\cite{KramersWannier} in the sense that the graph $G$ and its dual are \emph{a priori} not related: this duality relation must be viewed on the class of all $Z$-invariant Ising models.

The only value of $k$ for which the set of $Z$-invariant Ising models with parameter $k$ maps onto itself (that is $k^*=k$), is $k=0$. At this special value, we say that the model satisfies a {\em generalized self-duality property}. The expression for the weights simplify drastically, indeed:

\begin{equation*}
  K(0)=\frac{\pi}{2}, \quad \mathrm{sn}\biggl(\frac{2 K(0)}{\pi}\theta\bigg\vert 0\biggr)= \sin \theta,\quad  \mathrm{cn}\biggl(\frac{2 K(0)}{\pi}\theta\bigg\vert 0\biggr)= \cos \theta.
\end{equation*}

The expression of the coupling constants reduces to:
\begin{equation}\label{eq:critical_coupling}
  J(\theta_e) = \frac{1}{2} \log \left(\frac{1+\sin \theta_e}{\cos \theta_e}\right).
\end{equation}
In the case of the square lattice, the rhombus half-angle assigned to each edge is $\pi/4$, so that the coupling constants are all equal to $\log\sqrt{1+\sqrt{2}}$, which is the value of the critical temperature of the Ising model on the square lattice \cite{KramersWannier}. Similarly, on the honeycomb (resp. triangular) lattice, the rhombus half-angle assigned to each edge is $\pi/3$ (resp. $\pi/6$), in this case as well the coupling constants coincide with the critical temperature $\log\sqrt{2+\sqrt{3}}$ (resp. $\log \sqrt{\sqrt{3}}$) \cite{Wannier}. For these reasons, we refer to $k=0$ as the \emph{critical value} of the parameter, and the choice of weights \eqref{eq:critical_coupling} as the \emph{critical coupling constants}.

The $Z$-invariant Ising model on an isoradial graph with this particular choice of coupling constants is referred to as the \emph{critical $Z$-invariant Ising model}.

\section{Fisher's correspondence between the Ising and Dimer models\label{sec:isingdimers}}

Let $G$ be any graph drawn on a torus, with coupling constants $J$. Then, Fisher \cite{Fisher} exhibits a correspondence between the Ising model on the
graph $G$, and the dimer model on a ``decorated'' version of $G$. This correspondence uses the high temperature expansion of the Ising partition function given in \eqref{eq:Zhightemp}. Let us first recall the definition of the dimer model.

\subsection{Dimer model}

Consider a finite graph $\GD=(V(\GD),E(\GD))$, and suppose that edges of $\GD$ are assigned a positive weight function $\nu=(\nu_e)_{e\in E(\GD)}$. The {\em dimer model on $\GD$ with weight function $\nu$} is defined as follows.

A {\em dimer configuration} $M$ of $\GD$, also called {\em perfect matching}, is a subset of edges of $\GD$ such that every vertex is incident to exactly one edge of $M$. Let $\M(\GD)$ be the set of dimer configurations of the graph $\GD$. The probability of occurrence of a dimer configuration $M$ is given by the {\em dimer Boltzmann measure}, denoted $\P^\nu$:
\begin{equation*}
 \P^\nu(M)=\frac{\prod_{e\in M}\nu_e}{\Z^\nu},
\end{equation*}
where $\Z^\nu=\sum_{M\in \M(\GD)}\prod_{e\in M}\nu_e$ is the {\em dimer partition function}.

\subsection{Fisher's correspondence}

Consider an Ising model on a toroidal graph $G$, with coupling constants $J$. In this paper we use the following slight variation of Fisher's correspondence.

The decorated graph, on which the dimer configurations live, is constructed from $G$ as follows. Every vertex of degree $k$ of $G$ is replaced by a {\em
decoration} consisting of $3k$ vertices: a triangle is attached to
every edge incident to this vertex, and these triangles are linked by edges in
a circular way, see Figure \ref{fig:decorated_graph} below. This new graph, denoted by $\GD$, is also embedded on the torus and has vertices of degree $3$. It is referred to as the {\em Fisher graph} of $G$. 

\begin{figure}[ht]
\begin{center}
\includegraphics[height=33mm]{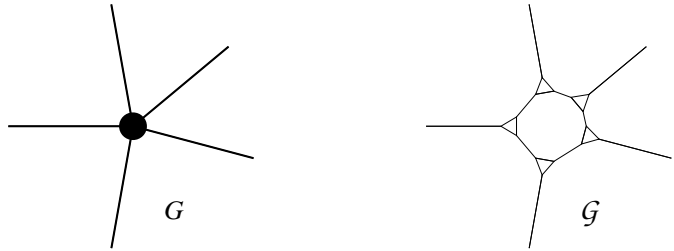}
\caption{A vertex of $G$ (left) with its incoming edges, and the corresponding decoration in $\GD$ (right).}
\label{fig:decorated_graph}
\end{center}
\end{figure}

Here comes the correspondence: to any contour configuration $\C$ coming from the high-temperature expansion of the Ising model on $G$, we associate $2^{|V(G)|}$ dimer configurations on $\GD$: edges present (resp. absent) in $\C$ are
absent (resp. present) in the corresponding dimer configuration of $\GD$. Once
the state of these edges is fixed, there is, for every decorated vertex,
exactly two ways to complete the configuration into a dimer configuration.
Figure \ref{fig:Fisher_correspondence} below gives an example in the case where $G$ is the square lattice $\ZZ^2$.

\begin{figure}[ht]
\begin{center}
\includegraphics[width=\linewidth]{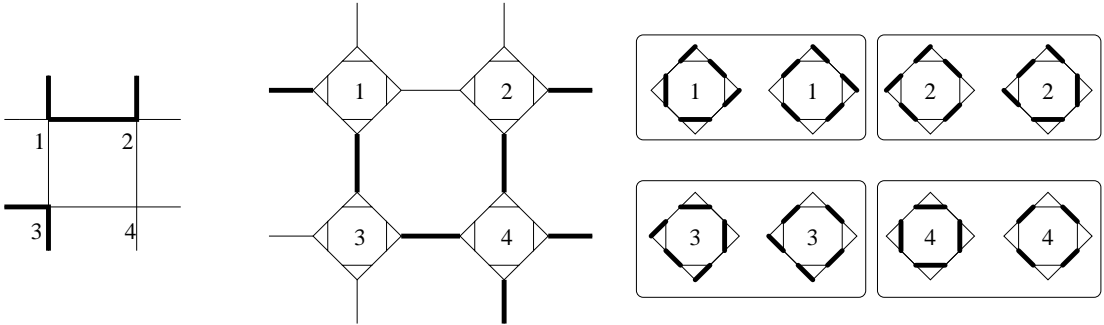}
\caption{Polygonal contour of $\ZZ^2$, and corresponding dimer configurations
  of the associated Fisher graph.}
\label{fig:Fisher_correspondence}
\end{center}
\end{figure}

Let us assign, to an edge $e$ of $\GD$, weight $\nu_e=1$, if it belongs to a
decoration; and weight $\nu_e=\coth(J_e)$, if it corresponds to an edge of $G$.
Then the correspondence is measure-preserving: every contour configuration $\C$
has the same number ($2^{|V(G)|}$) of images by this correspondence, and the product of the weights of the edges in $\C$, $\prod_{e\in \C} \tanh(J_e)$
is proportional to the weight $\prod_{e\not\in\C} \coth(J_e)$  of any of its corresponding dimer configurations with a proportionality factor, $\prod_{e\in E(G)} \tanh(J_e)$, which is independent of $\C$.

Note that Fisher's correspondence is in fact valid for any graph $G$ drawn on a surface without boundary, in particular, for an infinite planar graph.

\subsection{Critical dimer model on Fisher graphs}

Consider a critical $Z$-invariant Ising model on a toroidal isoradial graph $G$. 
Then, the dimer weights of the corresponding dimer model on the Fisher graph $\GD$ are:
\begin{equation*}
  \nu_e=\begin{cases}
    1 & \text{if $e$ belongs to a decoration,}\\
    \coth(J(\theta_e))=\coth\left(\log\sqrt{\frac{1+\sin\theta_e}{\cos\theta_e}}\,\right)=\cot\left(\frac{\theta_e}{2}\right) & \text{if $e$ comes from an edge of $G$.}
  \end{cases}
\end{equation*}
We refer to these weights as {\em critical dimer weights}, and to the corresponding dimer model as {\em critical dimer model on the Fisher graph $\GD$}.

Because of the self-duality property of the critical $Z$-invariant Ising model, the same dimer model is obtained when considering Fisher's correspondence for the low temperature polygonal contour expansion of the Ising model on the dual graph $G^*$.

\begin{rem}{\em
{\em Critical} dimer models on Fisher graphs do not enter in the framework of {\em critical} dimer models introduced in \cite{Kenyon3}. Indeed, the latter are restricted to live on bipartite isoradial graphs, whereas Fisher graphs are neither bipartite nor isoradial (although they are constructed from isoradial graphs).}
\end{rem}

In Section \ref{sec:periodic} below, we consider critical dimer models on infinite $\ZZ^2$-periodic Fisher graphs. The main results are Proposition \ref{prop:uniqueness} proving existence and uniqueness of the inverse Kasteleyn matrix, and Theorem \ref{prop:periodic} giving an explicit expression for the free energy and for the Gibbs measure obtained as weak limit of Boltzmann measures.

\section{Critical dimer model on infinite periodic Fisher graphs}\label{sec:periodic}

Let $\GD=(V(\GD),E(\GD))$ be an infinite $\ZZ^2$-periodic Fisher graph obtained from an infinite isoradial $\ZZ^2$-periodic graph $G$. Assume that edges of $\GD$ are assigned the dimer critical weight function denoted by $\nu$. 

Consider the exhaustion $\{\GD_n\}_{n\geq 1}$ of $\GD$ by toroidal graphs, where
$\GD_n=\GD/n\ZZ^2$. The graph $\GD_1$ is called the {\em fundamental domain}
of $\GD$. Let us assume that $\GD_1$ has an even number of vertices, (if this is not the case, the graph $\GD_1$ has no dimer configuration).

In Section \ref{sec:Kmatrix}, we define Kasteleyn matrices and state Theorem \ref{thm:torus} which is the yet classical result proving an explicit expression for the partition function of $\GD_n$, denoted by $\Z_n$, and for the the Boltzmann measure of $\GD_n$, denoted by $\P_n$. Theorem \ref{thm:torus} is valid in greater generality, {\em i.e.} for dimer models on $\ZZ^2$-periodic graphs with positive weight functions. We nevertheless choose to state them in the context which is relevant to this paper.

\subsection{Dimer model on toroidal graphs $\GD_n$}
\label{sec:Kmatrix}

The key objects used to obtain explicit expressions for the dimer model are {\em Kasteleyn matrices}. They are weighted oriented adjacency matrices of the graph $\GD_n$ defined as follows.

A {\em Kasteleyn orientation} of the edges of $\GD_n$ is an orientation of the edges such that all 
elementary cycles are {\em clockwise odd}, i.e. when
traveling clockwise around the edges of any elementary cycle of $\GD_n$, the
number of co-oriented edges is odd. If the number of vertices of the graph is even, then such an orientation always exists \cite{Kasteleyn,CimaReshe1}. The {\em Kasteleyn matrix} corresponding to such an orientation is a $|V(\GD_n)|\times |V(\GD_n)|$ skew-symmetric matrix, denoted $K_n$, defined by:
\begin{equation*}
(K_n)_{u,v}=
\begin{cases}
 \nu_{uv}&\text{ if } u\sim v,\text{ and } u\rightarrow v\\
-\nu_{uv}&\text{ if } u\sim v,\text{ and } u\leftarrow v\\
0&\text{ else}.
\end{cases}
\end{equation*}

Let $\gamma_{x,n}$ (resp. $\gamma_{y,n})$ be a path in the
dual graph of $\GD_n$, winding once around the torus horizontally
(resp. vertically). For $\theta,\tau\in\{0,1\}$, let $K_n^{\theta\tau}$ be the
Kasteleyn matrix $K_n$ in which the weights of the edges crossing the horizontal
cycle $\gamma_{x,n}$ are multiplied by $(-1)^{\theta}$, and those crossing the
vertical cycle $\gamma_{y,n}$ are multiplied by $(-1)^{\tau}$. Then, it is always possible to choose the Kasteleyn orientation of the edges of $\GD_n$ to be periodic, and such that Theorem \ref{thm:torus} below holds (changing the Kasteleyn orientation would merely change the order of the linear combinations below).

\begin{thm}{\rm \cite{Kast61,CKP,Russes,Tesler, CimaReshe1},\cite{Ke:LocStat}}\label{thm:torus}
\begin{enumerate}
\item The dimer partition function $\Z_n$ of the graph $\GD_n$ is:
\begin{equation}\label{eq:parttorus}
\Z_n=\frac{1}{2}\left(-\Pf(K_n^{00})+\Pf(K_n^{10})+\Pf(K_n^{01})+\Pf(K_n^{11})\right).
\end{equation}
\item
Let $E=\{e_1=u_1 v_1,\cdots,e_m=u_m v_m\}$, be a subset of edges of $\GD_n$, then the
probability $\P_n(e_1,\cdots,e_m)$ of these edges occurring in a dimer configuration of $\GD_n$ chosen
with respect to the Boltzmann measure $\P_n$, is:
\begin{equation}\label{eq:bolztorus}
\frac{\prod_{i=1}^m K_{u_i,v_i}}{2\Z_n}\left(-\Pf(K_n^{00})_{E^c}+\Pf(K_n^{10})_{E^c}+\Pf(K_n^{01})_{E^c}+\Pf(K_n^{11})_{E^c}\right),
\end{equation}
where $E^c=V(\GD_n)\setminus\{u_1,v_1,\cdots,u_m,v_m\}$, and
$(K_n^{\theta\tau})_{E^c}$ is the submatrix of $K_n^{\theta\tau}$ whose lines
and columns are indexed by $E^c$.
\end{enumerate}
\end{thm}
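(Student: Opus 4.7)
Both parts follow from the classical Kasteleyn theory for dimer models on surfaces of genus one \cite{Kast61, Tesler, CimaReshe1}; my plan is to specialize that argument to the present setting. Fix a reference dimer configuration $M_0$ of $\GD_n$. For any other configuration $M$, the symmetric difference $M \triangle M_0$ decomposes as a disjoint union of simple cycles of $\GD_n$, which together carry a homology class $(h_x(M), h_y(M)) \in H_1(\TT^2, \ZZ/2\ZZ) \cong (\ZZ/2\ZZ)^2$ read off from the parities of the numbers of intersections with the dual cycles $\gamma_{x,n}$ and $\gamma_{y,n}$.

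For part (1), the standard Pfaffian expansion---comparing each matching $M$ to $M_0$ by flipping the cycles of $M\triangle M_0$ one at a time---yields
\[
\Pf(K_n^{\theta\tau}) \;=\; \epsilon_0 \sum_{M \in \M(\GD_n)} \sigma\bigl(h_x(M), h_y(M)\bigr)\,(-1)^{\theta h_x(M) + \tau h_y(M)} \prod_{e \in M} \nu_e,
\]
for a global sign $\epsilon_0 = \pm 1$ and a function $\sigma \colon (\ZZ/2\ZZ)^2 \to \{\pm 1\}$ which depends on the Kasteleyn orientation and on $M_0$, but neither on $(\theta,\tau)$ nor on $M$ beyond its homology class. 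The prefactor $(-1)^{\theta h_x + \tau h_y}$ captures the aggregate twist of $K_n^{\theta\tau}$ on the cycles of $M \triangle M_0$.

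The crux---and the principal obstacle---is the sign lemma: the periodic Kasteleyn orientation can be arranged so that $\sigma$ takes a common value on exactly three of the four homology classes, and the opposite value on the fourth. I would establish this by appealing to the correspondence between Kasteleyn orientations and quadratic forms of Arf invariant $1$ on $H_1(\TT^2,\ZZ/2\ZZ)$, as in \cite{CimaReshe1}: the three classes on which the form vanishes are precisely those contributing with one sign, and the remaining class contributes with the opposite sign. Once the outlier is arranged to be $(0,0)$ with the correct global sign, a direct $4 \times 4$ Fourier inversion on $(\ZZ/2\ZZ)^2$ reproduces the coefficient pattern $(-1,+1,+1,+1)/2$ of~\eqref{eq:parttorus}.

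For part (2), I would write
\[
\P_n(e_1, \ldots, e_m) \;=\; \frac{1}{\Z_n}\prod_{i=1}^m \nu_{e_i}\,\cdot\,\Z_n^{E^c},
\]
where $\Z_n^{E^c}$ is the dimer partition function of the subgraph of $\GD_n$ induced on $E^c$. A cofactor-style iterated expansion of the Pfaffian about the rows and columns indexed by the vertices of $E$ shows that, for each $(\theta,\tau)$,
\[
\prod_{i=1}^m K_{u_i, v_i}\,\cdot\,\Pf\bigl((K_n^{\theta\tau})_{E^c}\bigr) \;=\; \pm\!\!\!\sum_{M \in \M(\GD_n),\, M \supseteq E}\!\!\! \sigma(h_x(M), h_y(M))(-1)^{\theta h_x(M) + \tau h_y(M)} \prod_{e \in M} \nu_e,
\]
with the same sign conventions as in part (1); in particular the signs associated to the oriented removed edges are absorbed into the prefactor $\prod_i K_{u_i, v_i}$. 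Combining the four values of $(\theta,\tau)$ with the coefficients derived above gives $\prod_i \nu_{e_i}\cdot \Z_n^{E^c}$ in the numerator, hence~\eqref{eq:bolztorus}.
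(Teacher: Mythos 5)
The paper offers no proof of Theorem \ref{thm:torus} at all --- it is quoted as a classical result with references to \cite{Kast61,Tesler,CimaReshe1,Ke:LocStat} --- and your sketch follows precisely the route taken in those references: decompose matchings according to the $\ZZ/2\ZZ$-homology class of $M\triangle M_0$, invoke the sign lemma identifying the Pfaffian sign pattern with a quadratic form on $H_1(\TT^2,\ZZ/2\ZZ)$, invert over the four twists, and use Pfaffian minors for the edge probabilities; this is correct in outline and is exactly what the paper implicitly relies on (note the paper's caveat that changing the orientation only permutes which of the four terms carries the minus sign, matching your ``arrange the outlier at $(0,0)$'' step). One small slip in your appeal to \cite{CimaReshe1}: the quadratic form of Arf invariant $1$ on $H_1(\TT^2,\ZZ/2\ZZ)$ vanishes only on the trivial class and equals $1$ on the three nontrivial ones, so the outlier sign sits at $(0,0)$ because the form is \emph{nonzero} on the other three classes --- the three-versus-one sign pattern you use is the right one, but not because the form vanishes on three classes; likewise the unspecified global sign $\pm$ in Part 2 (the permutation sign from extracting the vertices of $E$) is the one piece that still needs to be pinned down, as it is in the cited literature.
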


\subsection{Critical dimer model on infinite periodic Fisher graphs}
In this section, we start by defining the Kasteleyn operator of the infinite $\ZZ^2$-periodic Fisher graph $\GD$, then Proposition \ref{prop:uniqueness} shows existence and uniqueness of its inverse. Theorem \ref{prop:periodic} proves an explicit expression for the free energy, and for the Gibbs measure obtained as weak limit of Boltzmann measures for the critical dimer model on $\GD$.

\subsubsection{Kasteleyn operator and characteristic polynomial}
\label{sec:Kfourier}

Assume that the infinite graph $\GD$ has the periodic Kasteleyn orientation induced by the Kasteleyn orientation of $\GD_1$. Let $K$ be the corresponding infinite Kasteleyn matrix. Then $K$ is an operator acting on $\CC^{V(\GD)}$:
\begin{equation*}
\forall f\in \CC^{V(\GD)},\quad (Kf)_u=\sum_{u'\in V(\GD)}K_{u,u'}f_{u'}.
\end{equation*}
The operator $K$ acts by convolution when interpreted as follows. Since $\GD$ is $\ZZ^2$-periodic, vertices of $\GD$ can be identified with $V(\GD_1)\times \ZZ^2$, {\em i.e.} $(v,x,y)$ is the vertex $v$ in the $(x,y)$ copy of the fundamental domain $\GD_1$. Coefficients of the infinite Kasteleyn matrix $K$ are thus indexed as $K_{(v,x,y),(v',x',y')}$. A function $f\in\CC^{V(\GD)}$ can be written as a complex vector-valued function $f:\ZZ^2\rightarrow\CC^{V(\GD_1)}$,
\begin{equation*}
 (f(x,y))_{v}=f(v,x,y).
\end{equation*}
Since $K$ is translation invariant, it acts as:
\begin{align*}
((K f)(x,y))_{v}&=\sum_{(v',x',y')}K_{(v,x,y),(v',x',y')}(f(x',y'))_{v'}
=\sum_{(v',x',y')}K_{(v,x-x',y-y'),(v',0,0)}(f(x',y'))_{v'}\\
&=((K\ast f)(x,y))_{v},
\end{align*}
where $K:\ZZ^2\rightarrow M(|V(\GD_1)|\times|V(\GD_1)|)$ is the complex matrix-valued function defined~by:
\begin{equation*}
K(x,y)_{v,v'}=K_{(v,x,y),(v',0,0)}.
\end{equation*}
Introducing a new notation for the matrix-valued function defined above seemed artificial to us, we nevertheless hope that this will not confuse the reader.

Let $\widehat{K}:\TT^2\rightarrow M(|V(\GD_1)|\times|V(\GD_1)|)$ be the Fourier transform of $K$:
\begin{equation*}
\widehat{K}(z,w)=\sum_{(x,y)\in\ZZ^2}K(x,y)z^{x} w^{y}.
\end{equation*}
The {\em dimer characteristic polynomial} of the graph $\GD$, denoted by $P(z,w)$, is defined to be the determinant of $\widehat{K}(z,w)$. The set
\begin{equation*}
 \{(z,w)\in\CC^2\,:\,P(z,w)=\det(\widehat{K}(z,w))=0\},
\end{equation*}
is the {\em dimer spectral curve} of the graph $\GD$. 

Theorem \ref{thm:thm2}, see also Theorem \ref{thm:P_Harnack} of Section \ref{sec:P_Harnack} states that the dimer spectral curve of the graph $\GD$ is a Harnack curve of genus $0$, and that the dimer characteristic polynomial of the graph $\GD$ has a unique zero on $\TT^2$, located at $(1,1)$, with multiplicity two. The proof of Theorem \ref{thm:P_Harnack} is postponed until Section \ref{sec:P_Harnack}. These properties of the characteristic polynomial of $\GD$ are used in a crucial way in the next two sections.

\subsubsection{Inverse of the Kasteleyn operator}
\label{sec:Kinverse}

Let us define the space of rapidly decaying vector-valued functions $\mathcal{S}(\ZZ^2)$, and the space of vector-valued functions decaying at infinity $\mathcal{C}_0(\ZZ^2)$:
\begin{align*}
  \mathcal{S}(\ZZ^2)&=\{ f:\ZZ^2\rightarrow\CC^{V(\GD_1)} \,:\, \forall (m,n)\in\NN^2, \lim_{\|(x,y)\|\rightarrow\infty}\| x^m y^n f(x,y)\| =0\},\\
  \mathcal{C}_0(\ZZ^2)&=\{ f:\ZZ^2\rightarrow\CC^{V(\GD_1)} \,:\, \forall \varepsilon >0, \exists N\in \NN, \sup_{\|(x,y)\|\geq N} \|f(x,y)\| \leq \varepsilon\}.
\end{align*}

Proposition \ref{prop:uniqueness} below states the existence and uniqueness of the inverse Kasteleyn operator. The proof is inspired from the classical one for the continuous Laplacian on $\RR^n$, using Fourier transform.
\begin{prop}\label{prop:uniqueness}
For every function $f\in\mathcal{S}(\ZZ^2)$, there exists a unique function $u\in\mathcal{C}_0(\ZZ^2)$, such that $K u =f$.
\end{prop}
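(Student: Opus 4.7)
The plan is to imitate the classical Fourier-analytic proof for the continuous Laplacian on $\RR^n$: pass to the Fourier side, invert the symbol algebraically, and take the inverse transform. Since $\GD$ is $\ZZ^2$-periodic with finite fundamental domain, $K$ acts by convolution with a compactly supported matrix-valued kernel, so $\widehat K(z,w)$ is a Laurent polynomial in $z,w$. For $f\in\mathcal{S}(\ZZ^2)$ the Fourier series $\widehat f$ is smooth on $\TT^2$, and formally the solution should be $\widehat u=\widehat K^{-1}\widehat f$; the whole problem is to show that this matrix-valued function is sufficiently integrable in spite of the zero of $\det\widehat K$ at $(1,1)$. This integrability question is what I expect to be the main obstacle.

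For existence, the crucial algebraic input is the skew-symmetry of the Kasteleyn matrix: $K_{uv}=-K_{vu}$ gives $\widehat K(z,w)^T=-\widehat K(z^{-1},w^{-1})$, so $\widehat K(1,1)$ is antisymmetric. Its rank is therefore even, and since $\det\widehat K(1,1)=0$ the corank is at least two, which forces every cofactor of $\widehat K$ to vanish at $(1,1)$; consequently the adjugate $\operatorname{adj}\widehat K$ vanishes to order at least one there. By Theorem~\ref{thm:thm2}, $\det\widehat K$ vanishes to order two at $(1,1)$, and since $(1,1)$ is the \emph{only} real point of the spectral curve the two local tangent directions are complex conjugate, so the local quadratic form of $\det\widehat K$ is definite and $|\det\widehat K|\gtrsim r^2$ in polar coordinates centred at $(1,1)$. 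Combining these two facts, each entry of $\widehat K^{-1}=\operatorname{adj}\widehat K/\det\widehat K$ is bounded by $C/r$ near $(1,1)$, which is integrable on $\TT^2$. The formula
\begin{equation*}
u(x,y)=\frac{1}{(2\pi i)^2}\iint_{\TT^2}\widehat K(z,w)^{-1}\widehat f(z,w)\,z^{-x}w^{-y}\frac{\ud z}{z}\frac{\ud w}{w}
\end{equation*}
is thus an absolutely convergent integral; by construction $\widehat{Ku}=\widehat K\,\widehat u=\widehat f$, whence $Ku=f$, and the Riemann--Lebesgue lemma applied to the $L^1$ integrand gives $u(x,y)\to 0$ at infinity, so $u\in\mathcal{C}_0(\ZZ^2)$. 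Without the skew-symmetry of $K$, the adjugate would not have to vanish at $(1,1)$ and $\widehat K^{-1}$ would have a full second-order pole, which is not locally integrable in two dimensions; it is really the interplay between Kasteleyn antisymmetry and the multiplicity-two node provided by Theorem~\ref{thm:thm2} that makes the argument work.

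For uniqueness, suppose $u\in\mathcal{C}_0(\ZZ^2)$ satisfies $Ku=0$. Since $u$ is bounded, its Fourier series defines a $\CC^{V(\GD_1)}$-valued distribution $\widehat u$ on $\TT^2$, and $Ku=0$ reads $\widehat K\cdot\widehat u=0$ in the sense of distributions. Off the isolated zero $(1,1)$ of $\det\widehat K$ the matrix $\widehat K$ is smooth and invertible, so locally $\widehat u=\widehat K^{-1}(\widehat K\widehat u)=0$; thus $\widehat u$ is supported at the single point $(1,1)$ and is a finite linear combination of derivatives of $\delta_{(1,1)}$. Inverse Fourier transform then shows that $u(x,y)$ is a polynomial function of $(x,y)$, and the only polynomial lying in $\mathcal{C}_0(\ZZ^2)$ is zero.
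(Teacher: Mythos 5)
Your proof is correct and follows essentially the same route as the paper: skew-symmetry of $\widehat{K}(1,1)$ forces its kernel to have dimension at least two, so the cofactor matrix vanishes at $(1,1)$ while $\det\widehat{K}$ vanishes only to order two, making $\widehat{K}^{-1}$ integrable on $\TT^2$; existence then follows by inverse Fourier transform, and uniqueness by showing that $\mathcal{F}(u)$ is supported at $(1,1)$, hence a combination of derivatives of the Dirac distribution, hence zero. The only deviation is minor and harmless: where the paper proves $u\in\mathcal{C}_0(\ZZ^2)$ by splitting the convolution $U\ast f$ and using the decay of $U$, you apply the Riemann--Lebesgue lemma to the $L^1$ function $\widehat{K}^{-1}\widehat{f}$, a slightly more direct path to the same conclusion.
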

\begin{proof}[Proof of existence]
  By Theorem \ref{thm:thm2}, for $(z,w)\in\TT^2\setminus\{(1,1)\}$, $P(z,w)=\det(\widehat{K}(z,w))~\neq~ 0$. As a consequence, the following matrix valued function is well defined on $\TT^2\setminus\{(1,1)\}$:
\begin{equation*}
  \widehat{K}^{-1}(z,w)=\left(\widehat{K}(z,w)\right)^{-1} =\frac{\mathrm{Cof} (\widehat{K}(z,w))^t}{P(z,w)},
\end{equation*}
where $\mathrm{Cof} (\widehat{K}(z,w))$ is the cofactor matrix of the matrix $\widehat{K}(z,w)$.
Moreover, since $\widehat{K}(1,1)$ is skew-symmetric of even size and not invertible, the dimension of its kernel is even and non zero, thus at least equal to $2$. Therefore $\mathrm{Cof}(\widehat{K}(1,1))$ is identically $0$, and the singularity of $\widehat{K}^{-1}(z,w)$ at $(1,1)$ is of order $1$ at most, and not of order $2$ as one could have expected. This implies that $\widehat{K}^{-1}$
is integrable on $\TT^2$. Its inverse Fourier transform is thus well defined, let us denote it by $U$:
\begin{equation*}
  U(x,y)=\F^{-1}(\widehat{K}^{-1})(x,y)=\frac{1}{(2\pi i)^2}\iint_{\TT^2}\frac{\mathrm{Cof}( \widehat{K}(z,w))^t}{P(z,w)}z^{-x} w^{-y}\frac{\ud z}{z}\frac{\ud w}{w}.
\end{equation*}
Define the complex vector-valued function $u:\ZZ^2\rightarrow\CC^{V(\GD_1)}$ by:
\begin{equation*}
u=U \ast f.
\end{equation*}
Let us first prove that $K u=K\ast u=f$. For every $(x,y),\,(x_2,y_2)\in\ZZ^2$, we have:
\begin{multline*} 
\sum_{(x_1,y_1)\in\ZZ^2}K(x-x_1,y-y_1)U(x_1-x_2,y_1-y_2)
= \\
\frac{1}{(2\pi i)^2} 
\iint_{\TT^2}\Id_{|V(\GD_1)|}z^{x-x_2}w^{y-y_2}\frac{\ud z}{ z}\frac{\ud w}{ w}
=\Id_{|V(\GD_1)|}\delta_{x,x_2}\delta_{y,y_2}.
\end{multline*}
Since the sum over $(x_1,y_1)$ is in fact finite, we deduce:
\begin{align*}
(K\ast U\ast f)(x,y)&=\sum_{(x_1,y_1)\in\ZZ^2}\sum_{(x_2,y_2)\in\ZZ^2}
K(x-x_1,y-y_1)U(x_1-x_2,y_1-y_2)f(x_2,y_2)\\
&=\sum_{(x_2,y_2)\in\ZZ^2}\delta_{x,x_2}\delta_{y,y_2}\Id_{|V(\GD_1)|}f(x_2,y_2)=f(x,y).
\end{align*}
Let us now prove that $u$ is in $\mathcal{C}_0(\ZZ^2)$. Split the sum over $(x',y')$ into two parts, depending on the distance to $(x,y)$: 
\begin{equation*}
u(x,y)=\sum_{
\begin{subarray}{c}
(x',y')\\
\ud((x,y),(x',y'))>L
\end{subarray}
}U(x-x',y-y')f(x',y')+
\sum_{
\begin{subarray}{c}
(x',y')\\
\ud((x,y),(x',y'))\leq L
\end{subarray}}
U(x-x',y-y')f(x',y').
\end{equation*}
By classical estimate on the Fourier coefficients of a rational fraction (see for example \cite{KOS}, Lemma 4.4), we know that $U(x-x',y-y')$ decays linearly with the distance $\mathrm{d}((x,y),(x',y'))$. We can therefore choose $L$ large enough, so that  
\begin{equation*}
  \|U(x-x',y-y')\|\leq \frac{\varepsilon}{2 \|f\|_1 +1},
\end{equation*}
when $\mathrm{d}((x,y),(x',y'))~>~L$. The first sum is thus bounded by:
\begin{equation*}
 \sum_{
\begin{subarray}{c}
(x',y')\\
\ud((x,y),(x',y'))>L
\end{subarray}}
\left\|
U(x-x',y-y')\|\,\|f(x',y')
\right\| \leq 
\frac{\varepsilon}{2 ||f||_1 +1}\sum_{(x',y')}\|f(x',y')\| \leq \frac{\varepsilon}{2}.
 \end{equation*}
Since the coefficients $U(x-x',y-y')$ are bounded and $f\in\mathcal{S}(\ZZ^2)$, one can choose $N=N(L)$ large enough so that if $\|(x,y)\|\geq N$, then for all $(x',y')$ such that $\ud((x,y),(x',y'))\leq L$, we have $\|f(x',y')\|\leq \frac{\varepsilon}{2  n_L ||U||_{\infty}}$, where $n_L$ is the number of points in the ball of radius $L$.
In that case, the second sum is thus bounded by:
\begin{equation*}
\sum_{
\begin{subarray}{c}
(x',y')\\
\ud((x,y),(x',y'))\leq L
\end{subarray}}
\|U(x-x',y-y')\|\,\|f(x',y')\| \leq n_L
\|U\|_{\infty}\sup_{
\begin{subarray}{c}
(x',y')\\
\ud((x,y),(x',y'))\leq L
\end{subarray}}
\|f(x',y')\|
\leq \frac{\varepsilon}{2}.
\end{equation*}

Thus, when $\|(x,y)\|\geq N$, $\|u(x,y)\|\leq \varepsilon$, and we conclude that $u\in\mathcal{C}_{0}(\ZZ^2)$.

{\em Proof of uniqueness.}
Define $\mathcal{S}(\TT^2)$ to be the space of smooth functions on the torus with values in $\CC^{V(G_1)}$. Recall that the Fourier transform $\mathcal{F}$ is a bijection between $\mathcal{S}(\ZZ^2)$ and $\mathcal{S}(\TT^2)$.
Let us denote by $\langle\ ,\ \rangle$ the duality bracket between $\mathcal{S}(\ZZ^2)$ and its dual  $\mathcal{S'}(\ZZ^2)$, and by $(\ ,\ )$ the one between $\mathcal{S}(\TT^2)$ and its dual  $\mathcal{S'}(\TT^2)$.

By duality, the Fourier transform extends as a bijection from $\mathcal{S'}(\ZZ^2)$ to $ \mathcal{S'}(\TT^2)$:
\begin{equation*}
\forall u\in\mathcal{S}'(\ZZ^2),\;\;\forall \psi\in\mathcal{S}(\TT^2),\quad\quad
(\F u,\psi)=\langle u,\F^{-1}\psi\rangle.
\end{equation*}

Suppose that we have two functions $u_1$ and $u_2$ in $\mathcal{C}_{0}(\ZZ^2)$, such that $K\ast u_1=K\ast u_2 =f$. Let $u=u_2-u_1$. 
The function $u\in \mathcal{C}_{0}(\ZZ^2)$, defines a distribution $u$ in $\mathcal{S'}(\ZZ^2)$. 

Then, since $K\ast u=0$, we have for every $\psi\in \mathcal{S}(\TT^2)$,
\begin{multline}\label{eq:fourier}
0=(\F(K\ast u),\psi)=\langle K\ast u,\F^{-1}\psi\rangle =-\langle u,K\ast \F^{-1}\psi\rangle  \\
	   =-\langle u,\F^{-1}(\widehat{K}\psi)\rangle =-(\F(u),\widehat{K}\psi).
								  \end{multline}
 The third equality results from the fact that $K$ is a skew-symmetric operator.

Let $\xi$ be a function in $\mathcal{S}(\TT^2)$ whose support is contained in $\TT^2\setminus \{(1,1)\}$. Since $\widehat{K}^{-1}$, as a function of $(z,w)$ is smooth on $\TT^2\setminus \{(1,1)\}$, we deduce that 
\begin{equation*}
\psi=\widehat{K}^{-1}\xi\in \mathcal{S}(\TT^2).
\end{equation*}
Plugging this in \eqref{eq:fourier}, we deduce:
\begin{equation*}
\forall \xi \in \mathcal{S}(\TT^2)\text{ with supp}(\xi)\subset\TT^2\setminus\{(1,1)\},\quad(\F(u),\xi)=0.
\end{equation*}
This implies that the support of $\mathcal{F}(u)$ is contained in $\{(1,1)\}$. As a consequence, $\F(u)$ is a linear combination of derivatives of the Dirac distribution at $(1,1)$. But the only, such linear combination which has an inverse Fourier transform in $\mathcal{C}_0(\ZZ^2)$ is the trivial one identically equal to $0$, therefore, $\F(u)=0$, {\em i.e.} $u=0$.
\end{proof}

In the light of Proposition \ref{prop:uniqueness}, let us define the {\em inverse Kasteleyn matrix}, denoted by $K^{-1}$, to be the infinite matrix whose coefficients are:
\begin{equation}\label{eq:Kinverse}
K^{-1}_{(v,x,y),(v',x',y')}=\frac{1}{(2\pi i)^2}\iint_{\TT^2}\frac{\text{Cof}(\widehat{K}(z,w))_{v',v}}{P(z,w)}z^{x'-x}w^{y'-y}\frac{\ud z}{z}\frac{\ud w}{w}.
\end{equation}

\subsubsection{Free energy and Gibbs measure}

The {\em free energy} of the dimer model on the infinite $\ZZ^2$-periodic graph $\GD$ is denoted $f$, and is defined by:
\begin{equation*}
  f=-\lim_{n\rightarrow\infty}\frac{1}{n^2}\log \mathcal{Z}_n.
\end{equation*}

A {\em Gibbs measure} on the set of dimer configurations $\M(\GD)$ of $\GD$, is
a probability measure on $\M(\GD)$, which satisfies the following. If one
fixes a perfect matching in an annular region of $\GD$, then perfect matchings
inside and outside of this annulus are independent. Moreover, the probability
of occurrence of an interior matching is proportional to the product of its
edges weights.

In order to state Theorem \ref{prop:periodic} giving an explicit expression for the free energy of the dimer model on $\GD$, and for the Gibbs measure obtained as weak limit of the Boltzmann measures, we need the following notation: $\mathfrak{F}$ is the $\sigma$-field generated by cylinders, a {\em cylinder} being the set of dimer configurations of $\GD$ containing a fixed finite subset of edges of $\GD$.

\begin{thm}\label{prop:periodic}$\,$
\begin{enumerate}
\item The free energy of the dimer model on $\GD$ is: 
\begin{equation*}
f=-\frac{1}{2(2\pi i)^2}\iint_{\TT^2}\log P(z,w)\frac{\ud z}{z}\frac{\ud w}{w}.
\end{equation*}
\item There is a unique probability measure $\P$ on $(\M(\GD),\mathfrak{F})$, such
that the probability of occurrence of a subset of edges $E=\{e_1=u_1 v_1,\cdots,e_m=u_m v_m\}$, of $\GD$ in a dimer configuration of $\GD$ is:
\begin{equation}\label{eq:periodic}
\P(e_1,\cdots,e_m)=\left(\prod_{i=1}^m K_{u_i,v_i}\right)\Pf((K^{-1})_E),
\end{equation}
where $(K^{-1})_E$ is the submatrix of the infinite matrix $K^{-1}$ given in \eqref{eq:Kinverse}, whose lines and columns are indexed by the vertices defining the edges of $E$. Moreover, $\P$ is a translation invariant ergodic Gibbs measure.
\end{enumerate}
\end{thm}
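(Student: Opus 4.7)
For part (1), the plan is to combine the toroidal Pfaffian formula of Theorem~\ref{thm:torus} with a discrete Fourier decomposition of $K_n^{\theta\tau}$. Because the Kasteleyn orientation is chosen periodic, the space $\CC^{V(\GD_n)}$ decomposes into $n^2$ invariant subspaces indexed by characters of $(\ZZ/n\ZZ)^2$ twisted by $((-1)^\theta,(-1)^\tau)$, on each of which $K_n^{\theta\tau}$ acts as $\widehat{K}(z,w)$. This yields
\begin{equation*}
|\Pf K_n^{\theta\tau}|^2 = |\det K_n^{\theta\tau}| = \prod_{z^n=-(-1)^\theta,\ w^n=-(-1)^\tau} |P(z,w)|,
\end{equation*}
so that $\frac{1}{n^2}\log|\Pf K_n^{\theta\tau}|$ is a Riemann sum for $\frac{1}{2(2\pi i)^2}\iint_{\TT^2}\log|P(z,w)|\frac{\ud z}{z}\frac{\ud w}{w}$. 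The integrability of this logarithm is where Theorem~\ref{thm:thm2} enters: since $(1,1)$ is the only zero of $P$ on $\TT^2$ and has multiplicity $2$, $\log|P|$ has only a mild logarithmic singularity, integrable in two real dimensions. The Riemann sum converges for the twisted boundary conditions $(\theta,\tau)\neq (0,0)$ (the grid avoids $(1,1)$), and for $(0,0)$ one removes the zero sample-point at $(1,1)$ and argues its contribution vanishes. Finally, since $\Z_n\ge 0$ is a signed combination of the four Pfaffians all of the same exponential order $e^{-n^2 f}$, one shows (by a sign analysis, or by bounding $\Z_n$ from below by a single dominant term) that $\Z_n^{1/n^2}\to e^{-f}$.

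For part (2), I would start from \eqref{eq:bolztorus} and use the standard skew-symmetric Pfaffian identity
\begin{equation*}
\Pf((K_n^{\theta\tau})_{E^c}) = \epsilon_E \, \Pf(K_n^{\theta\tau}) \, \Pf\bigl((K_n^{\theta\tau})^{-1}|_E\bigr),
\end{equation*}
where $\epsilon_E\in\{\pm 1\}$ depends only on the index set $E$. Injecting this into \eqref{eq:bolztorus} factors out $\Pf((K_n^{\theta\tau})^{-1}|_E)$, and once this factor is shown to converge to a common limit $\Pf(K^{-1}|_E)$ independent of $(\theta,\tau)$, the remaining weighted average over $(\theta,\tau)$ of $\Pf K_n^{\theta\tau}/(2\Z_n)$ collapses to $1$ (this is exactly the identity $\Z_n = \frac{1}{2}\sum_{\theta\tau}\epsilon_{\theta\tau}\Pf K_n^{\theta\tau}$), and one is left with the formula \eqref{eq:periodic}.

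The core technical step is therefore showing $(K_n^{\theta\tau})^{-1}_{u,v}\to K^{-1}_{u,v}$ for each fixed pair of vertices $u,v$ and each $(\theta,\tau)$. By Fourier inversion on the discrete twisted torus,
\begin{equation*}
(K_n^{\theta\tau})^{-1}_{(v,0,0),(v',x',y')} = \frac{1}{n^2}\sum_{z^n=-(-1)^\theta,\ w^n=-(-1)^\tau} \frac{\mathrm{Cof}(\widehat{K}(z,w))_{v',v}}{P(z,w)}\,z^{x'}w^{y'},
\end{equation*}
which is a Riemann sum for the integral defining $K^{-1}$ in \eqref{eq:Kinverse}. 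The crucial fact, already used in Proposition~\ref{prop:uniqueness}, is that $\mathrm{Cof}(\widehat{K}(1,1))=0$, so the integrand has at worst a first-order pole at $(1,1)$, which is Lebesgue-integrable on $\TT^2$; for $(\theta,\tau)\neq(0,0)$ the grid avoids $(1,1)$ and convergence is by standard Riemann-sum estimates near an isolated integrable singularity, while for $(0,0)$ the value at the singular grid-point is bounded and contributes $O(1/n^2)$.

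Once \eqref{eq:periodic} is established for every cylinder, uniqueness of $\P$ on the cylinder $\sigma$-field $\mathfrak{F}$ is automatic by Kolmogorov. Translation invariance is immediate from $K^{-1}_{(v,x,y),(v',x',y')}$ depending only on $(x-x',y-y')$. The Gibbs property follows by restricting the weak-convergence argument to configurations fixed outside an annulus and using the factorization of Pfaffians inside/outside. For ergodicity, the plan is to use mixing: by the first-order pole structure at $(1,1)$, $K^{-1}_{u,v}$ decays polynomially as $|u-v|\to\infty$, so the Pfaffian formula \eqref{eq:periodic} yields decay of correlations between cylinders separated by a large distance, which implies ergodicity under translations. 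The main obstacle, as emphasized in the introduction, is precisely the direct proof of weak convergence: the bipartite argument of Kenyon--Okounkov--Sheffield relies on Sheffield's theorem, unavailable here, and must be replaced by the uniform control of all four $(K_n^{\theta\tau})^{-1}$ near the singular point $(1,1)$ described above.
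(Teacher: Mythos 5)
Your Part 1 and your treatment of the twisted terms $(\theta,\tau)\neq(0,0)$ in Part 2 follow the paper's route (block diagonalization of $K_n^{\theta\tau}$, Riemann sums kept at distance $O(1/n)$ from the double zero $(1,1)$, Jacobi's formula $\Pf((K_n^{\theta\tau})_{E^c})=\Pf((K_n^{\theta\tau})^{-1})_E\,\Pf(K_n^{\theta\tau})$, convergence of the discrete Fourier inversion to \eqref{eq:Kinverse}, then Kolmogorov, translation invariance, and ergodicity from the decay of $K^{-1}$). However, there is a genuine gap exactly where the non-bipartite difficulty lies: the term $-\Pf((K_n^{00})_{E^c})$ in \eqref{eq:bolztorus}. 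Since $P(1,1)=0$, the matrix $K_n^{00}$ is singular for every $n$, so $(K_n^{00})^{-1}$ does not exist and neither the Pfaffian--Jacobi identity nor the discrete Fourier inversion you invoke can be applied for $(\theta,\tau)=(0,0)$; your claim that ``the value at the singular grid-point is bounded and contributes $O(1/n^2)$'' is unfounded, because at $(z,w)=(1,1)$ the ratio $\mathrm{Cof}(\widehat{K}(1,1))_{v',v}/P(1,1)$ is $0/0$, not a bounded quantity. Dropping the term is not an option either: although $\Pf(K_n^{00})=0$ (which is what makes $\sum_{(\theta,\tau)\neq(0,0)}\Pf(K_n^{\theta\tau})/(2\Z_n)=1$), the sub-Pfaffian $\Pf((K_n^{00})_{E^c})$ has no reason to vanish, and it enters \eqref{eq:bolztorus} with weight $1/(2\Z_n)$.

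Controlling this term is precisely the new work in the paper (Case 2 of the proof and Lemma \ref{lem:firstterm1}): one block-diagonalizes $K_n^{00}$, isolates the $2q$-dimensional kernel of $\widehat{K}(1,1)$, and shows via Cauchy--Binet and Jacobi expansions that $\Pf((K_n^{00})_{E^c})/\sqrt{\det\bar\Bs}=O(n^{-2q})$, and separately that $\sqrt{\det\bar\Bs}/\Z_n=O(n)$, whence $\Pf((K_n^{00})_{E^c})/\Z_n=O(1/n)$. The second estimate is itself delicate: comparing $\det\bar\Bs$ with $\det K_n^{10}$ produces a sum of second differences of $\log|P|$ near the double zero which is a priori of order $\log n$, and it is kept bounded only through a cancellation argument (pairing lattice points via the symmetry associated with the quadratic form of $P$ at $(1,1)$). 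None of this is replaced by anything in your plan, so as written the convergence of $\P_n(e_1,\dots,e_m)$ to \eqref{eq:periodic}, i.e.\ the heart of Part 2, is not established.
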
 

\begin{proof}
By Theorem \ref{thm:thm2}, we know that $P(z,w)$ has a unique double zero at $(1,1)$ on the unit torus. With this information, the general argument of \cite{CKP} works for Part $1$. We nevertheless sketch the proof for the convenience of the reader.

For the proof of Parts $1$ and $2$ of Theorem \ref{prop:periodic}, we need the following facts.
\begin{enumerate}
\item[(a)] This argument can be found in \cite{CKP}.  For every
$\theta,\tau\in\{0,1\}$, using Fourier transform, the matrix
$K_n^{\theta\tau}$ can be block diagonalized, with $(n^2)^2$ blocks of size
$|V(\GD_1)|\times|V(\GD_1)|$:
\begin{equation}\label{eq:fourier2}
K_n^{\theta\tau}=(Q_n^{\theta\tau})^\dagger B_n^{\theta\tau}Q_n^{\theta\tau},
\end{equation}  
where the matrices $Q_n^{\theta\tau}$ and $B_n^{\theta\tau}$ are given in
block form ($x,y,j,k,j',k'\in\{~0,~\cdots~,~n-1~\}$):
\begin{align*}
(Q_n^{\theta\tau})_{(\cdot,j,k),(\cdot,x,y)}&=
\frac{1}{n}e^{i\frac{(2j+\theta)\pi}{n}x}
e^{i\frac{(2k+\tau)\pi}{n}y}
\Id_{|V(\GD_1)|},\\
(B_n^{\theta\tau})_{(\cdot,j,k),(\cdot,j',k')}&=\delta_{j,j'}\delta_{k,k'}\widehat{K}\left(e^{i\frac{(2j+\theta)\pi}{n}},e^{i\frac{(2k+\tau)\pi}{n}}\right).
\end{align*}
Since the matrix $B_n^{\theta\tau}$ is block diagonal, we simply write
$(B_n^{\theta\tau})_{(\cdot,j,k)}$ for the $(\cdot,j,k)$ diagonal block.
The matrix $Q_n^{\theta\tau}$ is unitary, i.e. $(Q_n^{\theta\tau})^\dagger
Q_n^{\theta\tau}=\Id_{(|V(\GD_1)|n^2)}$.
\item[(b)] Recall that $P(z,w)=\det(\widehat{K}(z,w))$, and that by Theorem \ref{thm:thm2}, $P(z,w)$ has a single double zero at $(1,1)$ on the unit torus. We will use the following facts:
\begin{enumerate}
\item[$\circ$] $\left(e^{i\frac{(2j+\theta)\pi}{n}},e^{i\frac{(2k+\tau)\pi}{n}}\right)=(1,1)\Leftrightarrow
  (j,k)=(0,0)\text{ and }(\theta,\tau)=(0,0)$.
\item[$\circ$] For all other indices, the points
  $\left(e^{i\frac{(2j+\theta)\pi}{n}},e^{i\frac{(2k+\tau)\pi}{n}}\right)$ are at distance at least
  $O\left(\frac{1}{n}\right)$ from the zero $(1,1)$ of $P$ on the torus.
\end{enumerate}
\end{enumerate}

\underline{\em Proof of Part 1}.

As a consequence of Equation \eqref{eq:fourier2}:
\begin{equation}\label{eq:detK_prod}
\det{K_n^{\theta\tau}}=\det{B_n^{\theta\tau}}=\prod_{j=0}^{n-1}\prod_{k=0}^{n-1}P\left(e^{i\frac{(2j+\theta)\pi}{n}},e^{i\frac{(2k+\tau)\pi}{n}}\right).
\end{equation}
Using (b), we deduce that for every $n$, $\det(K_n^{\theta\tau})=0$, if and only if
$(\theta,\tau)=(0,0)$. Moreover, for $(\theta,\tau)\neq (0,0)$, the points 
 $(e^{i\frac{(2j+\theta)\pi}{n}},e^{i\frac{(2k+\tau)\pi}{n}})$ are at distance at least
  $O\left(\frac{1}{n}\right)$ from the zero of $P$, so that by the general argument of
  \cite{CKP} (see also \cite{KOS}), we have:
\begin{align*}
\lim_{n\rightarrow\infty}\frac{1}{n^2}\log
\Pf(K_n^{\theta\tau})&=\frac{1}{2}\lim_{n\rightarrow\infty}\frac{1}{n^2}\log\det(K_n^{\theta\tau})\\
&=\frac{1}{2}\frac{1}{(2\pi)^2}\lim_{n\rightarrow\infty}\frac{(2\pi)^2}{n^2}\sum_{j=0}^{n-1}\sum_{k=0}^{n-1}\log
P\left(e^{i\frac{(2j+\theta)\pi}{n}},e^{i\frac{(2k+\tau)\pi}{n}}\right),\\
&=\frac{1}{2}\frac{1}{(2\pi)^2}\int_{0}^{2\pi}\int_{0}^{2\pi}\log
P(e^{i\eta},e^{i\xi})\,d\eta\,d\xi.
\end{align*}
By Theorem \ref{thm:torus}, 
\begin{equation}
  \max_{\theta,\tau}\{\Pf(K_n^{\theta\tau})\}\leq \Z_n\leq
2\max_{\theta,\tau}\{\Pf(K_n^{\theta\tau})\},
\label{eq:encadrementZ}
\end{equation}
and we deduce Part $1$ of Theorem \ref{prop:periodic}.

\underline{\em Proof of Part $2$}.

Let us prove that the Boltzmann probability $\P_n(e_1,\cdots,e_m)$ given in \eqref{eq:bolztorus} of the edges $e_1=u_1 v_1,\cdots,e_m=u_m v_m$ being in a dimer configuration of $\GD_n$ ($n$ large enough) converges to the RHS of
\eqref{eq:periodic}, as $n\rightarrow\infty$. The existence of a unique
measure $\P$ equal to the RHS of \eqref{eq:periodic} on cylinder sets is then
given by Kolomogorov's extension theorem (a similar argument is done in
details in \cite{Bea1}). The Gibbs property results from the fact that $\mathcal{P}$ is a weak limit of Boltzmann measures. Translation invariance is a consequence of the periodicity of the weights, and ergodicity (even mixing) readily follows from the decay of the coefficients of $K^{-1}$ with the distance between vertices.

\underline{\em Case 1:} {\em Terms of \eqref{eq:bolztorus} involving
  $\Pf(K_n^{\theta\tau})_{E^c}$, $(\theta,\tau)\neq (0,0)$.}
In this case, the general argument of \cite{CKP} works (see
also \cite{KOS}). It runs as follows. By (b), for every $n$,
$\det(K_n^{\theta\tau})\neq 0$. 
We thus apply Jacobi's
formula for the determinant of minor matrices. Observing that all matrices
involved are skew symmetric, this yields:
\begin{equation*}
\Pf(K_n^{\theta\tau})_{E^c}=\Pf((K_n^{\theta\tau})^{-1})_E\Pf(K_n^{\theta\tau}).
\end{equation*}
Moreover, by (a), we have for all $(\theta,\tau)\neq (0,0)$:
\begin{multline*}
(K_n^{\theta\tau})^{-1}_{(v,x,y),(v',x',y')}=\\
=\frac{1}{n^2}\sum_{j=0}^{n-1}\sum_{k=0}^{n-1}e^{i\frac{(2j+\theta)\pi}{n}(x' -x)}e^{i\frac{(2k+\tau)\pi}{n}(y'-y)}
\frac{\Cof\left( \widehat{K}\left(e^{i\frac{(2j+\theta)\pi}{n}},e^{i\frac{(2k+\tau)\pi}{n}}\right)\right)_{v',v}}{P\left(e^{i\frac{(2j+\theta)\pi}{n}},e^{i\frac{(2k+\tau)\pi}{n}}\right)}.
\end{multline*}
Since the points $\left(e^{i\frac{(2j+\theta)\pi}{n}},e^{i\frac{(2k+\tau)\pi}{n}}\right)$ are at distance at least
  $O\left(\frac{1}{n}\right)$ from the zero of $P$, the contribution of the points close to the singularity is negligible, and the Riemann sum converges
  to $K^{-1}_{(v,x,y),(v',x',y')}$ given in \eqref{eq:Kinverse}. 

Moreover, $\frac{\Pf(K_n^{\theta\tau})}{2\Z_n}$
is bounded, and since $\Pf(K_n^{00})=0$,
$
\sum_{(\theta,\tau)\neq(0,0)}\frac{\Pf(K_n^{\theta\tau})}{2\Z_n}=1.
$
We conclude that:
\begin{equation*}
\lim_{n\rightarrow\infty}\frac{\Pf(K_n^{10})_{E^c}+\Pf(K_n^{01})_{E^c}+\Pf(K_n^{11})_{E^c}}{2\Z_n}=
\Pf(K^{-1})_E,
\end{equation*}
where $(K^{-1})_E$ is the submatrix of the infinite matrix $K^{-1}$, whose lines and columns are indexed by vertices of $E$.

\underline{\em Case 2:} {\em Term of \eqref{eq:bolztorus} involving
  $\Pf(K_n^{00})_{E^c}$.}
Let us show that, $\lim_{n\rightarrow\infty} \frac{\Pf(K_n^{00})_{E^c}}{2\Z_n}=0$. We actually prove the following precise estimate:
\begin{equation}\label{eq:firstterm}
  \frac{\Pf(K_n^{00})_{E^c}}{\Z_n}= O\left( \frac{1}{n} \right).
\end{equation}

To simplify notations, we write $K_n^{00}=K_n$, $Q_n^{00}=Q$,
$B_n^{00}=B$.

By (a), $B$ is skew-Hermitian, so that the diagonal block $B_{(\cdot,0,0)}$
is too. Moreover by (b), $B_{(\cdot,0,0)}=\widehat{K}(1,1)$ is not invertible since
$\det(\widehat{K}(1,1))=P(1,1)=0$. Let $2q$ be the dimension of the kernel of
$B_{(\cdot,0,0)}$, and $P_0$ be an orthogonal matrix of size $|V(\GD_1)|\times|V(\GD_1)|$, such that:
\begin{equation*}
B_{(\cdot,0,0)}=P_0^t D_0 P_0,
\end{equation*}
where $D_0$ is diagonal, with the first $2q$ elements being $0$. Observe that
$P_0$ and $D_0$ are independent of $n$.

Define $\Qs$ to be the matrix obtained from $Q$ by multiplying all blocks of
the first block line by $P$ on the left. Then, $\Qs$ is unitary and
\begin{equation*}
K_n=\Qs^\dagger \Bs \Qs,
\end{equation*}
where $\Bs$ is block diagonal, with:
\begin{align*}
\Bs_{(\cdot,0,0)}&=D_0,\\
\Bs_{(\cdot,j,k)}&=B_{(\cdot,j,k)}, \text{ for all
}(j,k)\neq(0,0).
\end{align*}
Let $\bar\Bs$ be the matrix obtained from $\Bs$ by removing
the first $2q$ lines and first $2q$ columns. Since, for $(j,k)\neq (0,0)$, all blocks
$B_{(\cdot,j,k)}$ are invertible, the matrix $\bar\Bs$ is.

The proof of \eqref{eq:firstterm} is a consequence of:
\begin{lem} \label{lem:firstterm1} We have the following estimates:
\begin{enumerate}
  \item $\displaystyle \frac{\Pf(K_n^{00})_{E^c}}{\sqrt{\det\bar\Bs}}=O\left( \frac{1}{n^{2q}} \right).$
  \item $\displaystyle \frac{\sqrt{\det \bar\Bs}}{\Z_n}=O\left( n \right).$
\end{enumerate}
\end{lem}

Before proving Lemma \ref{lem:firstterm1}, let us end the proof of Theorem \ref{prop:periodic}.
Combining the above two estimates, we see that the LHS of \eqref{eq:firstterm} is $O(n^{1-2q})$, and since $q\geq 1$, it goes to zero at least as fast as $n^{-1}$, as $n$ goes to infinity.
\end{proof}

\begin{proof}[Proof of Part $1$ of Lemma \ref{lem:firstterm1}]
  Recall that $|E|=2m$. To simplify notations, let us set $2N:=|V(\GD_1)|n^2$. We will use the following convention: if $A$ is a matrix, and $S$ (resp. $T$) is a subset of row-indices (resp. column-indices) of $A$, the $A_S^T$ is the submatrix obtained from $A$ by keeping rows with indices in $S$ and columns with indices in $T$. If $S$ (resp. $T$) is omitted, then all the rows (resp. columns) are kept\footnote{To comply with the notations used before and to simply notations, there is one exception to this rule: $(K_n)_{E^c}$ still denotes the submatrix of $K_n$ with rows \emph{and} columns indexed by $E^c$.}.

We apply Cauchy-Binet's formula to compute $\det(K_n)_{E^c}$:
\begin{align*}
\det(K_n)_{E^c}&=\det((\Qs^\dagger\Bs\Qs)_{E^c}^{E^c})=
\det(\Qs_{E^c}^\dagger\Bs\Qs^{E^c})\\
&=\sum_{\begin{subarray}{l} 
S,\,T\,\subset\{1,\cdots,2N\}\\
|S|=|T|=2N-2m
\end{subarray}}
\det(\Qs_{E^c}^{\dagger\,S})\det(\Bs_S^T)\det(\Qs_{T}^{E^c}).
\end{align*}

Since the first $2q$ lines and $2q$ columns of $\Bs$ are $0$, $\det(K_n)=0$ if
$m<q$, so that \eqref{eq:firstterm} is true. Let us assume that $m\geq
q$. Then,
\begin{equation*}
\det(K_n)_{E^c}=
\sum_{\begin{subarray}{l} 
S,\,T\,\subset\{2q+1,\cdots,2N\}\\
|S|=|T|=2N-2m
\end{subarray}}
\det(\Qs_{E^c}^{\dagger\,S})\det(\Bs_S^T)\det(\Qs_{T}^{E^c}).
\end{equation*}
Let $\underline{S}^c=\{2q+1,\cdots,2N\}\setminus
S,\,\underline{T}^c=\{2q+1,\cdots,2N\}\setminus T$. Then,
by Jabobi's formula and unitarity of $\Qs$, we have:
\begin{gather*}
\det(\Bs_S^T)=\det(\bar\Bs)\det((\bar\Bs^{-1})_{\underline{T}^c}^{\underline{S}^c}),\\
\det(\Qs_{T}^{E^c})=\det(\Qs)\det(\Qs_{E}^{\dagger\,T^c}),\quad
\det(\Qs_{E^c}^{\dagger\,S})=\det(\Qs^\dagger)\det(\Qs_{S^c}^{E}).
\end{gather*}
So that:
\begin{equation*}
\det((K_n)_{E^c})=\det(\bar\Bs)
\sum_{\begin{subarray}{l} 
S,\,T\,\subset\{2q+1,\cdots,2N\}\\
|S|=|T|=2N-2m
\end{subarray}}
\det(\Qs_{E}^{\dagger\,T^c})\det((\bar\Bs^{-1})_{\underline{T}^c}^{\underline{S}^c})\det(\Qs_{S^c}^{E}).
\end{equation*}
We have the following expansions for the determinants, see for example
\cite{Bourbaki:alg}, A~III.98, formula $(21)$:
\begin{align*}
\det(\Qs_{S^c}^{E})&=\sum_{
\begin{subarray}{l} 
L\,\subset\, E\\
|L|=2q
\end{subarray}}
\rho_{L,L'}\det(\Qs_{\{1,\cdots,2q\}}^L)\det(\Qs_{\underline{S}^c}^{L'}),\\
\det(\Qs_{E}^{\dagger\,T^c})&=\sum_{
\begin{subarray}{l} 
K\,\subset\, E\\
|K|=2q
\end{subarray}}
\rho_{K,K'}\det(\Qs_{K}^{\dagger\,\{1,\cdots,2q\}})\det(\Qs_{K'}^{\dagger\,\underline{T}^c}),
\end{align*}
where $L'=S^c\setminus L$, $K'=T^c\setminus K$, and $\rho_{L,L'}$,
$\rho_{K,K'}$ are signature of permutations constructed from $L,\,L'$ and $K,\,K'$, see
\cite{Bourbaki:alg} for a precise definition.

Thus,
\begin{multline*}
\frac{\det(K_n)_{E^c}}{\det(\bar\Bs)}=\sum_{
\begin{subarray}{l} 
K,\,L\,\subset\, E\\
|K|=|L|=2q
\end{subarray}}
\rho_{K,K'}\,\rho_{L,L'}\det(\Qs_{K}^{\dagger\,\{1,\cdots,2q\}})\det(\Qs_{\{1,\cdots,2q\}}^L)\\
\sum_{
\begin{subarray}{l} 
\underline{S}^c\,\underline{T}^c\,\subset\, \{2q+1,\cdots,2N\}\\
|\underline{S}^c|=|\underline{T}^c|=2m-2q
\end{subarray}}
\det(\Qs_{K'}^{\dagger\,\underline{T}^c})\det((\bar\Bs^{-1})_{\underline{T}^c}^{\underline{S}^c})\det(\Qs_{\underline{S}^c}^{L'}).
\end{multline*}
Let $\widetilde{\Qs}$ be the matrix of size $(2N-2q)\times 2N$ obtained form $\Qs$
by removing the first $2q$ lines. Then, using Cauchy-Binet's formula again, we have:
\begin{equation*}
\frac{\det(K_n)_{E^c}}{\det(\bar\Bs)}=\sum_{
\begin{subarray}{l} 
K,\,L\,\subset\, E\\
|K|=|L|=2q
\end{subarray}}
\rho_{K,K'}\,\rho_{L,L'}\det(\Qs_{K}^{\dagger\,\{1,\cdots,2q\}})\det(\Qs_{\{1,\cdots,2q\}}^L)
\det((\widetilde{\Qs}^\dagger\bar\Bs^{-1}\widetilde{\Qs})_{K'}^{L'}).
\end{equation*}
Now, by definition of the matrix $\Qs$, all elements of
$\Qs_{K}^{\dagger\,\{1,\cdots,2q\}}$ and $\Qs_{\{1,\cdots,2q\}}^L$ are $O(n^{-1})$. Hence, by Hadamard's inequality,
$\det(\Qs_{K}^{\dagger\,\{1,\cdots,2q\}})$  and $\det(\Qs_{\{1,\cdots,2q\}}^L)$
 are $O\left(\frac{1}{n^{2q}}\right)$.
Moreover, by definition of the matrix $\widetilde{\Qs}$ and $\bar\Bs$, 
\begin{equation*}
(\widetilde{\Qs}^\dagger\bar\Bs^{-1}\widetilde{\Qs})_{(v,x,y)(v',x',y')}=
\frac{c_{vv'}}{n^2}+
\frac{1}{n^2} \sum_{j=0}^{n-1} \sum_{\substack{k=0\\ (j,k)\neq(0,0)}}^{n-1} e^{i\frac{2j\pi}{n}(x'-x)} e^{i\frac{2k\pi}{n}(y'-y)}
\frac{\Cof\left( \widehat{K}\left(e^{i\frac{2j\pi}{n}},e^{i\frac{2k\pi}{n}}\right)\right)_{v',v}}{P\left(e^{i\frac{2j\pi}{n}},e^{i\frac{2k\pi}{n}}\right)},
\end{equation*}
for some number $c_{vv'}$ independent of $n$, coming from the contribution of the first block of $\bar\Bs$. Since $(j,k)\neq(0,0)$, all points
$\left(e^{i\frac{2j\pi}{n}},e^{i\frac{2k\pi}{n}}\right)$ are at distance at least
$O\left(\frac{1}{n}\right)$ from the $0$ of $P$, and the Riemann sum converges
to $K^{-1}_{(v,x,y)(v',x',y')}$ given in \eqref{eq:Kinverse}. Moreover, these terms
are uniformly bounded, so that:
\begin{equation*}
\left\lvert \det(\widetilde{\Qs}^\dagger\bar\Bs^{-1}\widetilde{\Qs})_{K'}^{L'}\right\rvert\leq C,
\end{equation*}
for some constant $C$. Since $k$ and $q$ are independent of $n$, we deduce
that:
\begin{equation*}
\frac{\det((K_n)_{E^c})}{\det(\bar\Bs)}=O\left(\frac{1}{n^{4q}}\right).
\end{equation*}
\end{proof}

\begin{proof}[Proof of Part $2$ of Lemma \ref{lem:firstterm1}]
  Since the partition function $\Z_n$ is bounded from below by any of the $\mathrm{Pf}(K_n^{\theta\tau})$, it suffices to show that
  \begin{equation*}
    \det(\bar\Bs) =  \det(K_n^{10}) \times O(n^2).
  \end{equation*}

  By \eqref{eq:detK_prod} we have:
  \begin{align*}
    \left| \det(K_{n}^{10}) \right| &=\prod_{j,k\in\{-\lfloor \frac{n}{2}\rfloor,\dots,\lfloor \frac{n}{2}\rfloor-1\}} \left| P\left(e^{i\frac{(2j+1)\pi}{n}},e^{i\frac{(2k) \pi}{n}}\right)\right|,\\ 
    \left| \det(\bar\Bs)\right| &= \Lambda_0\prod_{\substack{j,k\in\{-\lfloor \frac{n}{2}\rfloor,\dots,\lfloor \frac{n}{2}\rfloor -1\} \\ (j,k)\neq (0,0)}} \left|  P\left(e^{i\frac{2j\pi}{n}},e^{i\frac{2k\pi}{n}}\right)\right|,
  \end{align*}

where $\Lambda_0$ is the product of the non-zero eigenvalues of $D_0$. 
Let us define:
   \begin{equation*}
    g(x,y) = P\left(e^{i\pi x},e^{i\pi y}\right), \quad  f(x,y) = \log \left|g(x,y)\right|= \Re \log(g(x,y)), 
  \end{equation*}
 then taking the logarithm of the ratio of the two determinants yields:
  \begin{equation}
    \log \left|\frac{\det K_n^{10}}{\det \bar\Bs}\right| = -\log\Lambda_0+\sum_{\substack{j,k\in\{-\lfloor \frac{n}{2}\rfloor,\dots,\lfloor \frac{n}{2}\rfloor-1\}\\ (j,k)\neq (0,0)}} \Delta_n^{(1)} f\left(\frac{2j}{n},\frac{2k}{n}\right) + \frac{1}{2} \sum_{\varepsilon \in \{-1,+1\}}f\left( \frac{\varepsilon}{n},0 \right),
    \label{eq:ratiodetBbar}
  \end{equation}
  where
  \begin{equation*}
    \Delta_n^{(1)} f(x,y) =  \frac{1}{2}\sum_{\varepsilon \in \{-1,+1\}} \left(f\left( x+\frac{\varepsilon}{n},y \right)-f(x,y)\right).
  \end{equation*}

The first term in \eqref{eq:ratiodetBbar}, $-\log\Lambda_0$ is a constant. The last sum  is equivalent to $2 \log n$ because, again by Theorem \ref{thm:P_Harnack}, $(1,1)$ is a  zero of $P$ of order exactly 2, and thus, for a certain $c\neq 0$, 
  \begin{equation*}
    f\left(\frac{\varepsilon}{n},0\right)=\log \left|\frac{c}{n^2} \left( 1+O\left( \frac{1}{n} \right) \right)\right| = 2 \log n + \text{bounded error terms}.
  \end{equation*}

To finish the proof, we need to show that first sum is $o(2\log n)$.
We first use Taylor expansion to show that for $(j,k)\neq(0,0)$,
  \begin{equation}
    \Delta_n^{(1)} f\left( \frac{2j}{n},\frac{2k}{n} \right) = \frac{1}{n^2} \frac{\partial^2 f}{\partial x^2} \left( \frac{2j}{n},\frac{2k}{n} \right)+ O\left( \frac{1}{n^{4}} \sup_{x\in\left[ \frac{2j-1}{n},\frac{2j+1}{n} \right] }\left|\frac{\partial^4 f}{\partial x^4}\left(  x,\frac{2k}{n} \right)\right|\right).
    \label{eq:Taylor_der2f}
  \end{equation}

  Let $V$ be a small neighborhood of $(0,0)$ in $[-1,1]^2$ contained in a ball of radius $\delta\in\bigr(0,\frac{1}{2}\bigr)$, and define 
  \begin{equation*}
    V_n= \left\{-\left\lfloor \frac{n}{2}\right\rfloor,\dots,\left\lfloor \frac{n}{2}\right\rfloor-1\right\}^2 \cap \frac{n}{2} V =\left\lbrace (j,k)\ : \left(\frac{2j}{n}, \frac{2k}{n}\right)\in V\right\rbrace.
  \end{equation*}
  As long as $(j,k)$ is outside of $V_n$, the fourth derivative of $f$ is uniformly bounded, and the discrete sum restricted to those indices $(j,k)$ converges to the integral of the smooth function $\frac{\partial^2 f}{\partial x ^2}$, on the square $[-1,1]^2$ deprived from $V$. Thus this part of the sum stays uniformly bounded as $n$ goes to infinity.

We now need to control the contribution to the first sum of \eqref{eq:ratiodetBbar} of the indices that are in $V_n$, \emph{i.e.} relatively close to zero. 
  Since $(1,1)$ is a double zero of $P$, we have for $x$ and $y$ close to zero the following expansions for $g$ and its derivatives:
  \begin{gather*}
    g(x,y)=P(e^{i\pi x},e^{i\pi y}) =\alpha x^2 + 2\beta x y +\gamma y^2 + O(\lVert(x,y)\rVert^3), \\
    \frac{\partial g}{\partial x}(x,y) = 2(\alpha x +\beta y) +O(\lVert(x,y)\rVert^2), \\
    \frac{\partial^2 g}{\partial x^2} (x,y)= 2\alpha + O\left( \lVert(x,y) \rVert \right),
  \end{gather*}
  with $\alpha,\beta,\gamma$ real numbers. Moreover, since $(1,1)$ is the only zero of $P$ on the unit torus, the quadratic form
  \begin{equation*}
  q(x,y)=\alpha x^2 + 2 \beta x y +\gamma y^2 = \alpha^{-1}\left(  (\alpha x + \beta y)^2 + ( \alpha \gamma-\beta^2)y^2 \right),
\end{equation*}
is definite. Without loss of generality, we can suppose that $\alpha$ is positive so that $q$ defines a scalar product on $\RR^2$.

  Let us now compute an expansion of the second derivative of $f$ with respect to the first variable:
\begin{align}
  \frac{\partial^2 f}{\partial x ^2}(x,y) &=\Re
  \frac{g(x,y)\frac{\partial^2 g}{\partial x^2}(x,y)-\left( \frac{\partial g}{\partial x}(x,y) \right)^2}{g(x,y)^2} \nonumber \\
  &= 2\alpha^2\left(\frac{-(\alpha x+\beta y)^2+(\alpha\gamma-\beta^2)y^2}{\left( (\alpha x + \beta y)^2 +(\alpha\gamma-\beta^2)y^2 \right)^2}\right) (1+O\left( ||(x,y)||\right))
  \label{eq:der2f},
\end{align}

if $(x,y)$ has the form $\left( \frac{2j}{n},\frac{2k}{n} \right)$, then \eqref{eq:der2f} divided by $n^{2}$ becomes:
\begin{equation}
  \frac{1}{n^2}  \frac{\partial^2 f}{\partial x^2}\left( \frac{2j}{n},\frac{2k}{n} \right)= \frac{\alpha^2}{2} \left( \frac{-(\alpha j + \beta k)^2 +(\alpha\gamma-\beta^2)k^2}{\left( (\alpha j +\beta k)^2 + (\alpha\gamma-\beta^2)k^2 \right)^2} \right) \left(1+O\left(\left\lVert\left(\frac{j}{n},\frac{k}{n}\right)\right\rVert\right)\right).
  \label{eq:mainterm_der2f}
\end{equation}

When we sum over $(j,k)$ in $V_n\setminus\{(0,0)\}$, the contribution of the error term is finite, since for some constant $C>0$,
\begin{equation*}
  \sum_{\substack{(j,k)\in U_n \\ (j,k)\neq(0,0)}} \left|\frac{-(\alpha j + \beta k)^2 +(\alpha\gamma-\beta^2)k^2}{\left( (\alpha j +\beta k)^2 + (\alpha\gamma-\beta^2)k^2 \right)^2}\times O\left(\left\lVert\left(\frac{j}{n},\frac{k}{n}\right)\right\rVert\right)\right|
  \leq \frac{C}{n} \sum_{\substack{\lVert(j,k)\lVert \leq \delta n \\ (j,k)\neq(0,0)}}\frac{1}{\left(   j^2+k^2 \right)^{1/2}},
\end{equation*}
which is bounded uniformly in $n$, as can be easily seen by estimating the sum using polar coordinates.

Let us now take care of the main contribution. We see that the expression

\begin{equation*}
  \frac{\alpha^2}{2} \left( \frac{-(\alpha j + \beta k)^2 +(\alpha\gamma-\beta^2)k^2}{\left( (\alpha j +\beta k)^2 + (\alpha\gamma-\beta^2)k^2 \right)^2} \right),
\end{equation*}
is changed in its negative if we change $(j,k)$ so that the following two quantities
\begin{equation*}
  \alpha j + \beta k \quad \text{and}\quad \sqrt{\alpha\gamma-\beta^2}k,
\end{equation*}
are interchanged, \emph{i.e.} if we replace $(j,k)$ by its image by the symmetry with respect to the bisector of the two straight lines
$\alpha x + \beta y = 0$  and $\sqrt{\alpha\gamma-\beta^2}y =0$ along its orthogonal for the scalar product $q$. Unfortunately, this transformation does not in general send an element of $\ZZ^2$ to another couple of integers. But if we call $(j',k')$ the closest lattice point to the image of $(j,k)$ by this symmetry, then the sum of the contributions of $(j,k)$ and $(j',k')$ is of order $O(||(j,k)||^{-3})$ and thus is summable. If the neighborhood $V$ we took around the origin is invariant under this symmetry, then the sum over all $(j,k)$ in this neighborhood of \eqref{eq:mainterm_der2f} is bounded as $n$ goes to $\infty$.

The fourth derivative of $f$ is a $O(||(x,y)||^{-4})$ and thus
\begin{equation*}
  \frac{1}{n^4}\sup_{x\in\left[ \frac{2j-1}{n},\frac{2j+1}{n} \right]}\left|\frac{\partial^4 f}{\partial x ^4} \left(x,\frac{2k}{n}\right)\right| =O\left( \frac{1}{||(j,k)||^4} \right),
\end{equation*}
which is summable over $\ZZ^2$. The contribution of the error term in \eqref{eq:Taylor_der2f} is bounded as $n$ goes to infinity. All the contributions from the first sum in the RHS of \eqref{eq:ratiodetBbar} are bounded when $n$ gets large. So we get finally that
\begin{equation*}
  \log\left|\frac{\det\bar\Bs}{\mathcal{Z}_n}\right| \leq  \log \left|\frac{\det \bar \Bs}{\det K_n^{10}}\right| = 2\log n + O(1),
\end{equation*}
which ends the proof of Part $2$ of Lemma \ref{lem:firstterm1}.
\end{proof}

\section{Characteristic polynomial and the Laplacian}
\label{sec:P_Harnack}
Let $G$ be an infinite $\ZZ^2$-periodic isoradial graph, and let $\GD$ be the corresponding Fisher graph. 
In this section, we define the {\em critical Laplacian operator} of the graph $G$, the {\em Laplacian characteristic polynomial} and the {\em Laplacian spectral curve}. We also recall the definition of the characteristic polynomial of the dimer model on $\GD$. Part $1$ of Theorem \ref{thm:P_Harnack} states that the Laplacian spectral curve is a Harnack curve of genus $0$ with a unique point $(1,1)$ on the unit torus, it is proved in Section \ref{subsec:lapl}. Part $2$ of Theorem \ref{thm:P_Harnack} states that the dimer spectral curve of $\GD$ and the Laplacian spectral curve of $G$ are the same, it is proved in Section \ref{subsec:kast}.

The critical weights for the Laplacian are $\tan(\theta)$~\cite{Kenyon3}. The {\em Laplacian matrix} on $G$, corresponding to these weights, is defined by: 
\begin{equation*}
  \Delta_{u,v}=
\begin{cases}
  \tan(\theta_{uv})&\text{if $u\sim v$},\\
  -\sum_{u'\sim u}\tan(\theta_{uu'}) &\text{if $u=v$},\\
  0 & \text{otherwise}.
\end{cases}
\end{equation*}
This naturally defines the Laplacian operator acting on functions on $V(G)$.

In a similar way to what we did for the Kasteleyn operator in Section \ref{sec:Kfourier}, let $\widehat{\Delta}(z,w)$ be the Fourier transform of the Laplacian $\Delta$. The {\em Laplacian characteristic polynomial}, denoted $P_\Delta(z,w)$, is the determinant of $\widehat{\Delta}(z,w)$. The set 
\begin{equation*}
\{(z,w)\in\CC^2\,:\,P_\Delta(z,w)=0\}
\end{equation*}
is called the {\em Laplacian spectral curve} of $G$.

Recall the definition of the characteristic polynomial $P(z,w)$ of the critical dimer model on $\GD$:
\begin{equation*}
  P(z,w)=\det(\widehat{K}(z,w)),
\end{equation*}
where $\widehat{K}(z,w)$ is the Fourier transform of the Kasteleyn matrix $K$ of the dimer model on $\GD$. The main result of this section is the following:
\begin{thm}
  \label{thm:P_Harnack}$\,$
  \begin{enumerate}
\item The Laplacian spectral curve of $G$ is a Harnack curve of genus $0$. Its unique point on $\TT^2$ is $(1,1)$, and it has multiplicity $2$.
\item There exists a constant $c\neq 0$, such that 
      \begin{equation*}
	P(z,w)= c P_\Delta (z,w).
      \end{equation*}
\end{enumerate}
\end{thm}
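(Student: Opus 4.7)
The plan is to attack the two parts in the order suggested by the introduction: first establish Part 1 by realising $P_\Delta$ as the characteristic polynomial of an \emph{auxiliary bipartite} dimer model, then use this realisation to prove Part 2 by a divisibility argument plus a Newton-polygon comparison.

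For Part 1, I would begin with a matrix-tree type identity: on each finite toroidal quotient $G_n$, $\det\widehat\Delta(z,w)$ (as a function of a twist $(z,w)\in\TT^2$) can be expanded, via a Kirchhoff-Forman style formula, as a signed, weighted sum over cycle-rooted spanning forests (CRSFs) of $G_1$, where the twist measures the homology class of the cycles. The key observation is that this CRSF sum agrees, term-by-term, with the dimer partition function on an isoradial bipartite graph $G^Q$ naturally associated to $G$ (built from $G$, $G^*$ and the diamond graph $\GR$); this is the standard correspondence between CRSFs on $G$ with the weights $\tan\theta_e$ and dimer configurations on $G^Q$ with critical isoradial weights. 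Once this combinatorial identity is in hand, the spectral curve of $P_\Delta$ coincides with the dimer spectral curve on an isoradial bipartite graph, and the result of Kenyon--Okounkov \cite{KO} immediately yields that it is a Harnack curve of genus $0$. The statements about $(1,1)$ follow from two observations: that $\widehat\Delta(1,1)$ is the periodic Laplacian of $G_1$, whose kernel contains the constants so $P_\Delta(1,1)=0$; and that a genus-$0$ Harnack curve has at most two points on $\TT^2$, with double counting when the two branches meet, so the symmetry $(z,w)\mapsto(\bar z,\bar w)$ combined with $P_\Delta\in\RR[z^{\pm1},w^{\pm1}]$ and the fact that $\widehat\Delta$ is real symmetric forces the two intersection points to coincide at $(1,1)$.

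For Part 2, I would first show that $P(z,w)\mid P_\Delta(z,w)$ in the Laurent polynomial ring. By Part 1 the zero set of $P_\Delta$ is an irreducible Harnack curve of genus $0$, so it admits an explicit rational parametrisation coming from Kenyon's discrete exponential functions $\mathrm{e}_\alpha$ on the diamond graph: to each $\alpha$ on the parameter curve one associates a discrete holomorphic function, and evaluating at the two sublattices of $G^Q$ produces a kernel vector of $\widehat\Delta$ at the corresponding point $(z(\alpha),w(\alpha))$. The crucial step is to lift this kernel vector to a kernel vector of $\widehat K(z(\alpha),w(\alpha))$ on the Fisher graph $\GD$: at each decoration of $\GD$ one constructs explicit local values on the six triangle vertices from the value of the discrete holomorphic function at the corresponding vertex of $G$, in such a way that the Kasteleyn equations around the triangle are satisfied and the three "external" edges transmit the discrete holomorphicity. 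Because this construction works for every $\alpha$ and the parametrisation is surjective onto the spectral curve, every zero of $P_\Delta$ (with the right multiplicity, to be checked by looking at the double point at $(1,1)$) is a zero of $P$, hence $P\mid P_\Delta$.

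To upgrade the divisibility to equality up to a constant, I would compare Newton polygons. The CRSF interpretation computes $N(P_\Delta)$ directly: the extreme monomials come from spanning forests whose cycles wind once around each direction of the torus, and a counting on the fundamental domain of $G$ gives an explicit polygon $N_\Delta$. On the dimer side, $N(P)$ is, by Kasteleyn's theory, the Newton polygon associated to perfect matchings of $\GD_1$ with prescribed homology, and an analogous counting on the Fisher graph yields the same polygon. Combined with $P\mid P_\Delta$, the equality $N(P)=N(P_\Delta)$ forces the quotient $P_\Delta/P$ to be a monomial; since both polynomials are symmetric and vanish only at $(1,1)$ on $\TT^2$, this monomial is a nonzero constant~$c$. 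The hardest step, and the one I expect to be the main obstacle, is the explicit construction of the kernel vector for $\widehat K$ from a kernel vector of $\widehat\Delta$: getting the signs and orientations right around each decoration so that both the three Kasteleyn equations at each triangle and the propagation along the edges inherited from $G$ hold simultaneously requires a careful use of the isoradial angles $\theta_e$ and of the discrete exponentials $\mathrm{e}_\alpha$.
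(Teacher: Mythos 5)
Your overall strategy is the paper's: Part 1 is proved by identifying $P_\Delta$ (via a cycle-rooted spanning forest expansion) with the characteristic polynomial of a bipartite isoradial dimer model built from $G$ and $G^*$, then quoting Kenyon--Okounkov, and the first half of Part 2 uses the discrete exponential parametrization of the Laplacian curve and a lift of kernel vectors of $\widehat{\Delta}$ to kernel vectors of $\widehat{K}$ on the Fisher graph (the paper defers that construction to \cite{isoising2}, as you essentially do). A minor variant: you invoke a Temperley-type CRSF-to-dimer bijection, where the paper instead uses the algebraic factorization $\mathcal{K}^\dagger A^\dagger A \mathcal{K}=\mathrm{diag}(\Delta^G,\Delta^{G^*})$ together with the CRSF expansion to compare $\det\widehat{\Delta^G}$ and $\det\widehat{\Delta^{G^*}}$; either route works. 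Be careful, though, with the direction of divisibility: showing that every point $(z(\lambda),w(\lambda))$ of the irreducible Laplacian curve is a zero of $P$ gives that $P_\Delta$ divides $P$, not $P\mid P_\Delta$ as you write, and this matters for how the argument must be closed.

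The genuine gap is in your Newton-polygon step. You assert that $N(P)$ can be computed directly ``by Kasteleyn's theory'' as the polygon of perfect matchings of $\GD_1$ with prescribed homology, and that an ``analogous counting'' gives $N(P)=N(P_\Delta)$. The Fisher graph is not bipartite: $\widehat{K}(z,w)$ is only skew-Hermitian, $\det\widehat{K}(z,w)$ is not the square of a matching generating function, and its permutation expansion is a signed sum over configurations of loops and doubled edges in which substantial cancellations occur; so the set of surviving monomials is not read off from matchings with prescribed homology. What is actually needed (and all that is needed, once $P_\Delta\mid P$ is known) is the inclusion $N(P)\subseteq N(P_\Delta)$, and establishing it is the main combinatorial work of the paper: one pairs cycle configurations visiting a decoration in opposite directions by rerouting around the small triangles, using the Kasteleyn condition $\widehat{K}_{u_1,u_2}\widehat{K}_{u_2,u_3}\widehat{K}_{u_3,u_1}=-1$, so that their contributions cancel; one then erases antiparallel non-trivial loops and contracts decorations, producing for each surviving monomial a family of non-intersecting non-trivial loops on $G_1$, which is grown into a cycle-rooted spanning forest whose weight shows the monomial's degree lies in $N(P_\Delta)$ (Lemmas \ref{lem:dimer_niloops} and \ref{lem:incluNewton}). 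Your proposal skips this entirely, and equality of the two polygons is only known a posteriori from $P=cP_\Delta$, so it cannot be an input. Finally, note that with $P_\Delta\mid P$ and $N(P)\subseteq N(P_\Delta)$ the quotient is automatically a nonzero constant (its Newton polygon must be a single point, and translating $N(P_\Delta)$ by a nonzero vector destroys the inclusion), so no additional symmetry argument is required.
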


The proof of Theorem \ref{thm:P_Harnack} is given in the next two sections.

\subsection{Laplacian spectral curve: proof of Part $1$ of Theorem \ref{thm:P_Harnack}}\label{subsec:lapl}

In order to prove Part $1$ of Theorem \ref{thm:P_Harnack}, we show in Proposition \ref{prop:detDelta=detKdouble} that the Laplacian characteristic polynomial is equal, up to an explicit multiplicative constant, to the characteristic polynomial of the dimer model on the {\em double of the graph $G$}, denoted $G_d$, defined below. The graph $G_d$ is isoradial and bipartite, so that by Kenyon and Okounkov \cite{KO}, we know that the dimer spectral curve of $G_d$ is a Harnack curve of genus $0$. 

Moreover by definition of the Laplacian, the sum of the columns of the matrix $\widehat{\Delta}(1,1)$ is the zero vector, so that $\det(\widehat{\Delta}(1,1))=P_{\Delta}(1,1)=0$. Since a Harnack curve has $0$ or $2$ conjugate zeros on a torus of given radius, we know that $(1,1)$ is the unique double zero of $P_{\Delta}(z,w)$ on $\TT^2$.

The statement of Proposition \ref{prop:detDelta=detKdouble} is given at the end of this section, since it is more natural to prove it before stating it.
 
\subsubsection{The double of the graph $G$}

The \emph{double of the graph $G$}, denoted by $G_d$, is the bipartite planar graph constructed from $G$  as follows: there is a black vertex in $G_d$ for every vertex of $G$ and every vertex of $G^*$, and there is a white vertex in the middle of each edge of $G$ (or each edge of $G^*$). A white vertex $w$ and a black vertex $b$ are connected by an edge in $G_d$ if the vertex of $G$ or $G^*$ corresponding to $b$ is adjacent to the edge corresponding to $w$, see Figure~\ref{fig:isodiamond}. The double $G_d$ is again a periodic isoradial graph, and its diamond graph is obtained from $G^{\diamond}$ by splitting each rhombus in 4 identical smaller rhombi. 

\begin{figure}[h]
    \begin{center}
      \includegraphics[height=3.8cm]{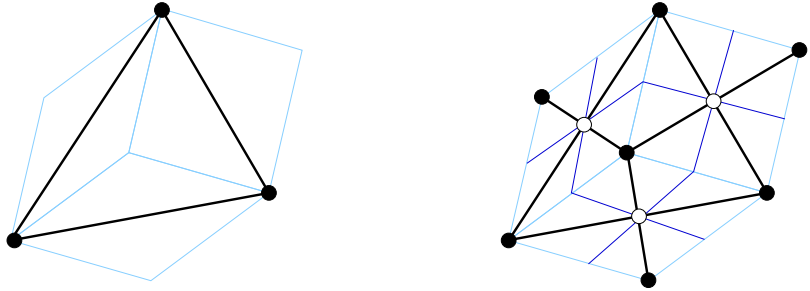}
      \caption{A piece of $G$, together with the rhombi corresponding to the represented edges (left). The corresponding piece in the double $G_d$ (right). The diamond graph of $G_d$ is obtained by splitting every rhombus of $G^{\diamond}$  in four identical smaller rhombi.}
\label{fig:isodiamond}
    \end{center}
\end{figure}
Note that by construction, the double of the dual graph $G^*$ is also $G_d$.

In \cite{Kenyon3}, Kenyon defines a bipartite complex Kasteleyn operator $\mathcal{K}$ on $G_d$ as an infinite matrix whose rows (resp. columns) are indexed by white (resp. black) vertices of $G_d$. If $w$ is a white vertex and $b$ a black vertex of $G_d$, then $\mathcal{K}_{w,b}=0$ when they are not neighbors. When they are adjacent to each other, then 
\begin{equation}
  \mathcal{K}_{w,b}=
  \frac{e^{i\beta}-e^{i\alpha}}{i} = 2\sin \theta_{wb} e^{i\frac{\beta-\alpha}{2}},
\end{equation}
where $e^{i\alpha}$ and $e^{i\beta}$ are the two unit vectors based at $w$ representing the two sides of the rhombus containing the edge $wb$, see Figure \ref{fig:rhombus} below.

\begin{figure}[ht]
  \begin{center}
    \includegraphics[width=3.8cm]{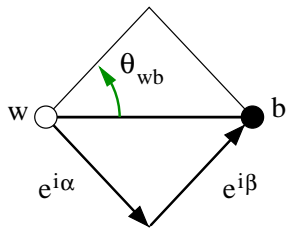}
    \caption{A rhombus of the diamond graph of $G_d$. It contains an edge between a white vertex $w$ and a black vertex $b$. The sides of the rhombus are represented by the two unit complex numbers $e^{i\alpha}$ and $e^{i\beta}$.}
    \label{fig:rhombus}
  \end{center}
\end{figure}

\subsubsection{Kasteleyn operator on $G_d$, incidence matrix and the Laplacian}

Consider any orientation of the edges of $G$. Then, for every white vertex $w$ of $G_d$ (corresponding to an edge of $G$), and for every vertex $b$ of $G$, the coefficient $M^G_{w,b}$ of the {\em oriented weighted incidence matrix} $M^G$, is defined by:
\begin{equation*}
M^{G}_{w,b}= 
\begin{cases}
0 & \text{if the edges corresponding to $w$ is not adjacent to $b$}\\
-\sqrt{\tan\theta} & \text{if the edge corresponding to $w$ is oriented away from $b$}\\
\sqrt{\tan\theta} & \text{if the edge corresponding to $w$ is oriented towards $b$},
\end{cases}
  \end{equation*}
  where $\theta$ is the half-angle of the rhombus of $G^{\diamond}$ containing the edge of $G$ corresponding to $w$. $M^{G^*}$, the incidence matrix of $G^*$ is defined as above, with the orientation of the edges of $G^*$ induced by that of $G$. 

In order to avoid confusion, let us denote by $\Delta^G$ and $\Delta^{G^*}$ the Laplacian on $G$ and $G^*$ respectively. From the definition, we then have:
\begin{equation}
(M^G)^\dagger M^{G}= \Delta^G, \quad (M^{G^*})^\dagger M^{G^*}=\Delta^{G^*},\quad (M^G)^\dagger M^{G^*}=0.
    \label{eq:Delta=MtM}
\end{equation}

Following Kenyon \cite{Kenyon3}, we now relate the Kasteleyn matrix $\mathcal{K}$, the incidence matrices $M^G$ and $M^{G^*}$ and the Laplacians $\Delta^G$ and $\Delta^{G^*}$. Define a diagonal operator $A$ on white vertices of $G_d$ by:
\begin{equation}
  A_{w,w} = \frac{e^{-i\psi}}{2\sqrt{\sin\theta \cos\theta}},
\end{equation}
where $e^{i\psi}$ is the unit vector pointing in the same direction as the oriented edge of $G$ corresponding to $w$, and $\theta$ is the half-angle of the rhombus containing this edge. If we order the columns of $\mathcal{K}$ by listing first the black vertices corresponding to vertices of $G$, and then vertices of $G^*$, the product $A \mathcal{K}$ has the following form:
\begin{equation*}
  A \mathcal{K} = \begin{pmatrix} M^G & i M^{G^*} \end{pmatrix}.
\end{equation*}
Using \eqref{eq:Delta=MtM}, this yields:
\begin{equation}
   {\mathcal{K}}^\dagger A^\dagger  A\mathcal{K}= (A\mathcal{K})^\dagger (A\mathcal{K}) =  
   \begin{pmatrix}  \left(M^G\right)^\dagger \\ -i \left( M^{G^*} \right)^\dagger \end{pmatrix}  
     \begin{pmatrix} M^G & i M^{G^*} \end{pmatrix} = 
	\begin{pmatrix} \Delta^G & 0 \\ 0 & \Delta^{G^*} \end{pmatrix}.
	  \label{eq:Delta=KK*}
\end{equation}
All the above operators commute with translations of $\ZZ^2$. Equation~\eqref{eq:Delta=KK*} induces a relation on the Fourier transforms, which are finite matrices with polynomial entries in the variables $(z,w)\in \TT^2$:
\begin{equation*}
    \widehat{{\mathcal{K}}^\dagger} \widehat{A^\dagger}  \widehat{A} \widehat{\mathcal{K}} =
    \widehat{\mathcal{K}}^\dagger  {\widehat{A}}^\dagger  \widehat{A}  \widehat{\mathcal{K}}
    = \begin{pmatrix} \widehat{\Delta^G} & 0 \\ 0 & \widehat{\Delta^{G^*}}\end{pmatrix}.
  \end{equation*}
Taking the determinant on both sides, and using the fact that:
\begin{equation*}
\left|\det\widehat{A}\right| = \prod_{\substack{w\in V({G_d}_1)\\ w \text{ white}}} \left|A_{w,w}\right|=\prod_{e\in E(G_1)} \frac{1}{2\sqrt{\sin\theta_e\cos\theta_e}},
\end{equation*}
yields the following relation: for every $(z,w)\in\TT^2$,
\begin{equation}
    \label{eq:relKDelta1}
   \left(\prod_{e\in E(G_1)}\frac{1}{4\sin\theta_e\cos\theta_e}\right)
\left\lvert\det \widehat{\mathcal{K}}(z,w)\right\rvert^2= \det \widehat{\Delta^G}(z,w) \det\widehat{\Delta^{G^*}}(z,w).
\end{equation}

The next step consists in relating $\det \widehat{\Delta^G}(z,w)$ and $\det\widehat{\Delta^{G^*}}(z,w)$.
This is done by giving a combinatorial interpretation of $\det \widehat{\Delta^G}(z,w)=P_{\Delta}(z,w)$ in terms of \emph{cycle-rooted spanning forests} on $G_1$. This is very similar to the link between the determinant of a principal minor of the Laplacian of a graph and spanning trees on this graph, given by Kirchhoff's matrix-tree theorem \cite{Kirchhoff:1847}.

\subsubsection{Combinatorial interpretation of $P_{\Delta}(z,w)$ and conclusion of Part $1$}

A \emph{cycle-rooted tree} $T$ of $G_1$ is a connected subgraph of $G_1$ with a unique cycle, which must be non trivial, \emph{i.e.} not contractible to a point on the torus on which $G_1$ is drawn. The \emph{homology class} of $T$ is that of its non trivial cycle in $\ZZ^2$, which is well defined up to a sign since the cycle is not oriented. A \emph{cycle-rooted spanning forest} of $G_1$ is a collection of disjoint cycle-rooted trees covering every vertex of $G_1$. Let us denote by $\mathcal{F}$ the set of cycle-rooted spanning forests of $G_1$.

\begin{lem} 
  \label{lem:detDelta_tree}
The Laplacian characteristic polynomial is the following combinatorial sum:
\begin{equation}
    P_{\Delta}(z,w)= \sum_{F\in\mathcal{F}} \prod_{T\in F} 
\left(\prod_{e\in T} \tan \theta_e \left\lvert \sqrt{z^x w^y}-\sqrt{z^{-x}w^{-y}}\right\rvert^2\right),   \label{eq:weightforest}
  \end{equation}
where $\pm(x,y)\in\ZZ^2$ is the homology class of the cycle-rooted tree $T$.
\end{lem}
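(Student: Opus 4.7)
The strategy is to view $P_\Delta(z,w) = \det\widehat{\Delta^G}(z,w)$ as the determinant of a \emph{twisted} (magnetic) Laplacian on the finite graph $G_1$ and then apply a Kirchhoff-type expansion adapted to $U(1)$-connections. First I would fix a fundamental domain identifying $G_1$ with a subset of $G$. Each edge $e$ of $G_1$ with endpoints $u,v$ lifts uniquely once a reference endpoint is chosen for $v$, which specifies a homology class $(x_e,y_e)\in\ZZ^2$. The off-diagonal entry $\widehat{\Delta^G}(z,w)_{u,v}$ then contains $\tan(\theta_e)\,z^{x_e}w^{y_e}$, while the diagonal entries keep the untwisted row sums $-\sum_{e\ni u}\tan\theta_e$. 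Equivalently, $\widehat{\Delta^G}(z,w)$ is the Laplacian on $G_1$ with weights $\tan\theta_e$ twisted by the flat $U(1)$-connection $e \mapsto z^{x_e}w^{y_e}$.

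The key input is then the magnetic matrix-tree theorem (due to Forman, and used in this bipartite-dimer setting by Kenyon): for a finite graph with positive edge weights $c_e$ and a $U(1)$-connection whose monodromy around a cycle $C$ is $h_C$,
\begin{equation*}
  \det\bigl(L_{\mathrm{twisted}}\bigr) = \sum_{F \in \mathcal F} \prod_{T \in F}\Bigl(\prod_{e\in T}c_e\Bigr)\bigl(2 - h_T - h_T^{-1}\bigr),
\end{equation*}
the sum being over cycle-rooted spanning forests. I would either cite this result directly or sketch the derivation: expand the determinant via the Leibniz rule, split each permutation into its fixed points and non-trivial cycles, identify each non-trivial cycle with a graph cycle carrying one of its two orientations, and observe that the two orientations of a single cycle combine to yield $h_T + h_T^{-1}$, while the diagonal contributions along the same vertex set produce the $2$; trees hanging from the cycles contribute the tree-weight factors via the standard matrix-tree argument. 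A pleasant by-product is that contractible cycles ($h_T=1$) contribute zero, which justifies the restriction to \emph{non-trivial} cycles in the statement of the lemma.

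To finish, I would substitute the holonomy formula for our specific connection: a cycle of $G_1$ whose lift in $G$ has homology class $\pm(x,y)$ has monodromy $z^{\pm x}w^{\pm y}$, so
\begin{equation*}
  2 - z^x w^y - z^{-x} w^{-y} = \bigl\lvert\sqrt{z^x w^y} - \sqrt{z^{-x}w^{-y}}\bigr\rvert^2,
\end{equation*}
where the right-hand side is read on $\TT^2$ via $\overline{z}=z^{-1}$, $\overline{w}=w^{-1}$ (and as an identity of Laurent polynomials elsewhere); the expression is manifestly invariant under $(x,y)\mapsto -(x,y)$, consistent with the fact that only the unoriented cycle is intrinsic. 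Plugging this into the twisted matrix-tree expansion yields exactly \eqref{eq:weightforest}. I expect the main technical hurdle to be the sign and orientation bookkeeping in the twisted matrix-tree expansion that produces the symmetric factor $2 - h_T - h_T^{-1}$ from each cycle; the remaining steps are essentially a direct identification.
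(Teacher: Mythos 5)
Your argument is correct, but it follows a different route from the paper. The paper proves the identity from scratch: it uses the factorization $\widehat{\Delta}=\widehat{M}^{\dagger}\widehat{M}$ through the twisted incidence matrix and Cauchy--Binet, so that $P_\Delta(z,w)=\sum_S \lvert\det \widehat{M}(z,w)_S\rvert^2$, and then identifies the contributing edge-subsets $S$ as cycle-rooted spanning forests and evaluates each $\det\widehat{M}(z,w)_S$ by peeling off leaves; the factor $\bigl\lvert\sqrt{z^xw^y}-\sqrt{z^{-x}w^{-y}}\bigr\rvert^2$ arises there as the squared modulus of the cycle contribution, manifestly nonnegative on $\TT^2$, which is also what makes the passage to the dual graph (equation \eqref{eq:DG=DG*}) immediate. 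You instead recognize $\widehat{\Delta}(z,w)$ as a Laplacian twisted by a flat unitary connection and invoke the Forman/Kenyon matrix-tree theorem for connections, then substitute the monodromy $z^{\pm x}w^{\pm y}$ and use $2-h-h^{-1}=\lvert\sqrt{h}-\sqrt{h^{-1}}\rvert^2$ on the torus; this is legitimate and shorter if one is willing to cite that theorem, and it explains for free why contractible cycles drop out. Two caveats: (i) your sketched Leibniz-expansion derivation of the twisted matrix-tree theorem is the weakest link --- the trees hanging off the cycles do not come directly from permutation cycles but from expanding the diagonal (fixed-point) entries as sums over incident edges, and organizing those choices into trees attached to cycles requires a genuine cancellation argument, so if you do not simply cite Forman you must supply it; (ii) with the sign convention you (and the paper) use for $\Delta$ (negative diagonal), the connection Laplacian of the matrix-tree theorem is $-\widehat{\Delta}$, so the stated identity holds up to a factor $(-1)^{\lvert V(G_1)\rvert}$ --- a convention issue already implicit in the paper's \eqref{eq:Delta=MtM}, but worth fixing in your write-up.
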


\begin{proof}
We follow the same strategy as for the proof of the matrix-tree theorem (see \emph{e.g.} \cite{Biggs}). Since we don't use the dual graph $G^*$ in this argument, in order to improve readability,
we drop the superscript $G$ in the operators $\Delta^G$ and $M^G$, and their Fourier transform.
We start from the Fourier transform of the first relation in \eqref{eq:Delta=MtM}
  \begin{equation*}
    \widehat{\Delta}={\widehat{M}}^{\dagger} \widehat{M}.
  \end{equation*}
Recall that the rows of $\widehat{M}$ are indexed by edges of $G_1$, and its columns are indexed by vertices of $G_1$. Let us use Cauchy-Binet's Theorem to express the determinant of $\widehat{\Delta}$ as
  \begin{equation*}
    \det\widehat{\Delta}(z,w)
    =\sum_{\substack{S\subset E(G_1) \\ |S|=|V(G_1)|}} \det (\widehat{M}^\dagger(z,w)^S) \det (\widehat{M}(z,w)_S)
    =\sum_{\substack{S\subset E(G_1) \\ |S|=|V(G_1)|}} \left|\det(\widehat{M}(z,w)_S)\right|^2.
  \end{equation*}
The matrix $\widehat{M}(z,w)_{S}$ is the incidence matrix of the graph $G_S$ obtained from $G_1$ by taking only the edges in $S$. If $G_S$ contains a trivial cycle, then we can find a function with support on this cycle in the kernel of $M_S$. Therefore the only contributions come from graphs, whose connected components have a number of edges equal to the number of vertices, and each have a non trivial cycle, \emph{i.e.} a cycle-rooted tree. All these non trivial cycle are \emph{parallel}, i.e. they have the same \emph{homology}, up to a sign.
 
After a possible reordering of the rows and of the columns of the matrix $\widehat{M}(z,w)_{S}$, we can assume that it is block diagonal, each diagonal block corresponding to a connected component. The square root of the weight of each edge in $S$ can be factored out from the determinant from each row of $\widehat{M}(z,w)_S$. The determinant of each block can be evaluated recursively by expanding it along columns corresponding to vertices of degree 1, \emph{i.e.} leaves of the tree, collecting a product of terms of modulus $1$ along the branches. The evaluation of the determinant reduced to the cycle gives two terms of modulus $1$, and the ratio between these two terms is $-z^x w^y$.
\end{proof}

A direct consequence of this lemma is:
\begin{equation}
\det \widehat{\Delta^G}(z,w) = \left(\prod_{e\in E(G_1)} \tan \theta_e\right) \det \widehat{\Delta^{G^*}}(z,w).
  \label{eq:DG=DG*}
\end{equation}

Indeed, the dual $T^*$ of a cycle rooted spanning forest $T$ on $G_1$, consisting of edges $e^*$ such that $e\not\in T$, is a cycle-rooted spanning forest on $G_1^*$. Moreover, since the weight of an edge for the Laplacian is the reciprocal of its dual edge,
\begin{equation*}
  \tan \theta_{e^*} =\tan\left( \frac{\pi}{2}-\theta_e \right)=(\tan \theta_e)^{-1},
\end{equation*}
the weight of $T^*$ is equal to the weight of $T$ divided by $\prod_{e\in E(G_1)}\tan \theta_e$.

Combining \eqref{eq:DG=DG*} and \eqref{eq:relKDelta1} yields: 
\begin{prop}
There exists a complex number $\zeta$ of modulus 1, such that for every $(z,w)\in\CC^2$,
  \label{prop:detDelta=detKdouble}
      \begin{equation*}
	\det \widehat{\Delta^G}(z,w)= \zeta \left(\prod_{e\in E(G_1)}2\cos\theta_e\right)^{-1}\det \widehat{\mathcal{K}}(z,w).
      \end{equation*}
\end{prop}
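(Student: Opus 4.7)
The plan is to derive Proposition \ref{prop:detDelta=detKdouble} by combining the two preceding identities \eqref{eq:relKDelta1} and \eqref{eq:DG=DG*}, and then lifting the resulting modulus relation on $\TT^2$ to a genuine Laurent polynomial identity. First I would substitute \eqref{eq:DG=DG*} into \eqref{eq:relKDelta1} to eliminate $\det\widehat{\Delta^{G^*}}$. The scalar prefactors simplify using $\frac{4\sin\theta\cos\theta}{\tan\theta} = 4\cos^2\theta$, and one obtains, for every $(z,w)\in\TT^2$,
\[
|\det\widehat{\mathcal{K}}(z,w)|^2 = \Bigl(\prod_{e\in E(G_1)} 2\cos\theta_e\Bigr)^2 (\det\widehat{\Delta^G}(z,w))^2.
\]
Since $\Delta^G$ has real and symmetric entries, $\widehat{\Delta^G}(z,w)$ is Hermitian on $\TT^2$, so $\det\widehat{\Delta^G}(z,w)$ is real on $\TT^2$. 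Taking square roots yields the modulus identity $|\det\widehat{\mathcal{K}}| = \prod_e(2\cos\theta_e)\,|\det\widehat{\Delta^G}|$ on $\TT^2$.

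To promote this to a Laurent polynomial identity I would work in the ring $\CC[z^{\pm 1}, w^{\pm 1}]$. Symmetry of $\Delta^G$ gives $\widehat{\Delta^G}(z^{-1},w^{-1}) = \widehat{\Delta^G}(z,w)^T$, so $\det\widehat{\Delta^G}$ is a self‑reciprocal Laurent polynomial with real coefficients. The $\TT^2$‑identity above then lifts to the algebraic identity
\[
\det\widehat{\mathcal{K}}(z,w)\cdot\overline{\det\widehat{\mathcal{K}}}(z^{-1},w^{-1}) = \Bigl(\prod_{e\in E(G_1)} 2\cos\theta_e\Bigr)^2 (\det\widehat{\Delta^G}(z,w))^2
\]
in $\CC[z^{\pm 1}, w^{\pm 1}]$, where $\overline{\det\widehat{\mathcal{K}}}$ denotes the Laurent polynomial obtained by complex conjugating the coefficients. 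Unique factorization in this UFD, combined with a matching of Newton polygons of $\det\widehat{\mathcal{K}}$ and $\det\widehat{\Delta^G}$, then forces $\det\widehat{\mathcal{K}}(z,w) = \zeta\prod_e(2\cos\theta_e)\det\widehat{\Delta^G}(z,w)$ for some complex scalar $\zeta$, whose modulus is pinned to $1$ by the identity on $\TT^2$.

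The main obstacle is the Newton polygon comparison. A pointwise modulus identity $|P|=c|Q|$ on $\TT^2$ alone does not force $P=\zeta Q$: monomial twists $P=\zeta z^a w^b Q$ are always compatible with it. To exclude such twists I would expand $\det\widehat{A}\cdot\det\widehat{\mathcal{K}} = \det(\widehat{M^G}\mid i\widehat{M^{G^*}})$ by Cauchy–Binet over pairs $(S,S^c)$, with $S\subset E(G_1)$ a CRSF of $G_1$ and $S^c$ the complementary CRSF of $G_1^*$, and compare term by term with the CRSF expansion of $\det\widehat{\Delta^G}$ given by Lemma \ref{lem:detDelta_tree}. The primal/dual CRSF correspondence already used in \eqref{eq:DG=DG*} shows that both expansions are supported on the same set of homology classes in $\ZZ^2$, so the Newton polygons coincide, the monomial twist is ruled out, and the proposition follows.
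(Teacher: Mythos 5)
Your opening step is exactly the paper's proof: the paper obtains the proposition simply by combining \eqref{eq:relKDelta1} with \eqref{eq:DG=DG*} (the same substitution and the same simplification of the prefactor to $\prod_e 4\cos^2\theta_e$), and it leaves the passage from the resulting modulus identity on $\TT^2$ to a polynomial identity on all of $\CC^2$ implicit. Your lifting of that identity is also sound as far as it goes: writing $D(z,w)=\det\widehat{\mathcal{K}}(z,w)$, $Q(z,w)=\det\widehat{\Delta^G}(z,w)$, $c=\prod_e 2\cos\theta_e$ and $D^\sigma(z,w)=\overline{D}(z^{-1},w^{-1})$ (coefficient-wise conjugation), the fact that $Q$ has real coefficients and satisfies $Q(z^{-1},w^{-1})=Q(z,w)$ does turn the $\TT^2$ identity into the Laurent polynomial identity $D\,D^\sigma=c^2Q^2$.

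The gap is in your final deduction. From $D\,D^\sigma=c^2Q^2$ together with $N(D)=N(Q)$ one cannot conclude $D=\zeta c\,Q$ by unique factorization alone: if $Q=q_1q_2$ with $q_1$ irreducible, $q_2$ a nonzero multiple of $q_1^\sigma$ not associate to $q_1$, and $N(q_1)$ centrally symmetric, then $D=\alpha q_1^2$ satisfies both $D\,D^\sigma=|\alpha|^2Q^2$ and $N(D)=N(Q)$ without being proportional to $Q$. What is missing is an irreducibility input ruling out such a splitting of $Q$ into two non-associate, mutually conjugate-reciprocal factors. This can be supplied without circularity: by Kenyon and Okounkov \cite{KO}, the spectral curve of the bipartite isoradial graph $G_d$, i.e.\ the zero set of $D$, is Harnack, hence irreducible (the same fact the paper invokes for the Laplacian curve in the proof of Lemma~\ref{lem:PdivPDelta}); with $D$ irreducible, $Q^2=c^{-2}D\,D^\sigma$ forces $D^\sigma$ to be associate to $D$ and then $Q=\lambda z^aw^bD$, and only at that point does your Newton-polygon (or extremal-coefficient) comparison kill the monomial, with $|\zeta|=1$ pinned by the $\TT^2$ identity. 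Two further remarks on your sketch of $N(D)=N(Q)$: equality of the supports of individual CRSF terms does not by itself give equality of the Newton polygons of the sums, since extremal monomials could cancel — you need the positivity/no-cancellation argument of the type used in the proof of Lemma~\ref{lem:incluNewton}; and the expansion of $\det\bigl(\widehat{M^G}\mid i\widehat{M^{G^*}}\bigr)$ over pairs $(S,S^c)$ is a Laplace expansion along complementary column blocks rather than Cauchy--Binet (a cosmetic point, but worth stating correctly).
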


This ends the proof of Part $1$ of Theorem~\ref{thm:P_Harnack}. In the next section we prove Part $2$, namely that the dimer characteristic polynomial $P$ of $\GD$ is equal to $P_{\Delta}$, up to a multiplicative constant.

\subsection{Dimer and Laplacian characteristic polynomial: Proof of Part $2$ of Theorem~\ref{thm:P_Harnack} \label{subsec:kast}}

In order to prove part $2$ of Theorem~\ref{thm:P_Harnack}, we first show in Lemma~\ref{lem:PdivPDelta} that $P(z,w)$ is a divisor of $P_\Delta (z,w)$, and then in Lemma~\ref{lem:incluNewton} that the Newton polygon of $P(z,w)$ is included in the Newton polygon of $P_{\Delta}(z,w)$. Combining these two results yields that there exists a constant $c\neq 0$, such that
\begin{equation*}
  \forall (z,w)\in\CC^2,\quad  \det \widehat{K}(z,w) = c \det \widehat{\Delta}(z,w).
\end{equation*}

\begin{lem}
  \label{lem:PdivPDelta}
  $P(z,w)$ is a divisor of $P_{\Delta}(z,w)$.
\end{lem}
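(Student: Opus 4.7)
The plan is to produce an explicit reduction of the Kasteleyn matrix $\widehat K(z,w)$ that makes the divisibility manifest, guided by the rational parametrization of $V(P_\Delta)$ provided by Part~1 just proved. That parametrization attaches to each $\lambda\in\CC^*$ a pair $(z(\lambda),w(\lambda))\in V(P_\Delta)$ together with a discrete exponential function $e_{(\cdot,\lambda)}$ on $V(G_1)$ lying in $\ker\widehat\Delta(z(\lambda),w(\lambda))$, in the spirit of \cite{Kenyon3, Mercat}. This supplies a Zariski-dense family of zeros of $P_\Delta$ at which one can test any putative polynomial identity between $\det\widehat K$ and $\det\widehat\Delta$.

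Concretely, I would exploit the block structure of the Fisher graph: at every vertex $v\in V(G_1)$ of degree $d$ the fundamental domain $\GD_1$ carries a decoration of $3d$ vertices, with weight-$1$ internal decoration edges and external edges of weight $\cot(\theta_e/2)$ going to neighboring decorations. A Schur-complement elimination of the decoration subsystem --- whose internal block is a sum of a constant invertible matrix and a $(z,w)$-dependent perturbation supported on external edges --- produces a reduced matrix $\widetilde K(z,w)$ of smaller size, together with an identity
\begin{equation*}
\det\widehat K(z,w)=c\cdot \det\widetilde K(z,w),\qquad c\in\CC^*.
\end{equation*}
A local computation, performed vertex by vertex in $G_1$, would then identify $\det\widetilde K(z,w)$ with (a multiple of) $\det\widehat\Delta(z,w)$ by checking the reduction on the test family $\{e_{(\cdot,\lambda)}\}$ from Part~1: an explicit candidate kernel vector of $\widehat K(z(\lambda),w(\lambda))$ built by placing $e_{(\cdot,\lambda)}$ on one distinguished decoration vertex per vertex of $G_1$ (with rhombus-angle-dependent coefficients on the remaining $3d-1$ vertices of each decoration) must survive the Schur reduction and project down to the known kernel vector of $\widehat\Delta(z(\lambda),w(\lambda))$.

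The main obstacle I foresee is precisely this local identification step. In the bipartite setting of Section~\ref{subsec:lapl} (applied to the double~$G_d$) the analogous step was immediate from the clean factorization $\mathcal K^\dagger A^\dagger A\mathcal K=\Delta^G\oplus\Delta^{G^*}$. Here $\GD$ is not bipartite and no such identity is available; one has to keep careful track of the signs coming from the Kasteleyn orientation on the triangular decorations and of the complex phases of the rhombus sides while eliminating the decoration variables. The role of the parametrization of $V(P_\Delta)$ is to replace this potentially unwieldy global algebra with a local, essentially trigonometric verification on the test family $\{e_{(\cdot,\lambda)}\}_{\lambda\in\CC^*}$: once the candidate kernel vectors above are shown to be annihilated by $\widehat K(z(\lambda),w(\lambda))$ for all $\lambda$, the polynomial identity $\det\widehat K\mid\det\widehat\Delta$ extends from the dense set $\{(z(\lambda),w(\lambda))\}$ to all of $\CC^*\times\CC^*$ by Zariski density, completing the proof of the lemma.
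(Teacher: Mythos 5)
Your core idea coincides with the paper's: parametrize the Laplacian spectral curve by the multipliers $(z(\lambda),w(\lambda))$ of the discrete exponentials, and exhibit, for each $\lambda$, a $(z(\lambda),w(\lambda))$-quasiperiodic function in the kernel of the Kasteleyn operator of $\GD$ (the paper delegates this construction to \cite{isoising2}; you only sketch it). But your concluding step is logically backwards. What this family of common zeros gives, using that the Laplacian spectral curve is Harnack and hence irreducible, is the inclusion of $\{P_\Delta=0\}$ in $\{P=0\}$ up to finitely many points, i.e.\ that $P_\Delta$ divides $P$ --- and that is the direction actually used afterwards: combined with the Newton polygon inclusion $N(P)\subset N(P_\Delta)$ of Lemma~\ref{lem:incluNewton} it forces the quotient to be a constant. (The printed statement of the lemma and the final sentence of the paper's proof disagree on the direction; what the argument establishes is $P_\Delta\mid P$.) Your last sentence instead claims $\det\widehat{K}\mid\det\widehat{\Delta}$ ``by Zariski density''. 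Divisibility is not a pointwise condition that can be propagated from a dense subset of one curve: knowing that $P$ vanishes along $\{P_\Delta=0\}$ says nothing about other irreducible factors $P$ may have, and ruling those out is precisely the role of the separate Newton-polygon argument; it cannot be absorbed into a density statement.

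The Schur-complement scheme is also not workable as described, and is not needed. In the Fisher graph every vertex belongs to a decoration, so there is no subset of $V(\GD_1)$ canonically indexed by $V(G_1)$; ``eliminating the decoration subsystem'' requires an arbitrary choice of retained vertices. Whatever the choice, either some eliminated vertices are joined by long edges, in which case the eliminated block carries the $(z,w)$ factors and the reduction gives $\det\widehat{K}(z,w)=\det D(z,w)\,\det\widetilde{K}(z,w)$ with a nonconstant polynomial prefactor and a Schur complement with rational entries (not the identity $\det\widehat{K}=c\,\det\widetilde{K}$ with $c\in\CC^*$ on which your plan rests, and with invertibility of $D$ at the test points unverified); or else you must retain every vertex incident to a long edge, and the reduced matrix has size $2|E(G_1)|$ rather than $|V(G_1)|$, so the ``vertex by vertex'' identification with $\widehat{\Delta}$ is no longer available. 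Finally, even granting some reduction, checking it on the family $\{e_{(\cdot,\lambda)}\}$ only matches kernels along the curve, which again yields containment of zero sets rather than an identification of determinants --- that identification, up to a constant, is the content of Theorem~\ref{thm:P_Harnack} as a whole --- and the explicit kernel vector $f_\lambda$ on $\GD$, which is the real technical heart of this lemma, is asserted rather than constructed.
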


\begin{proof}
  Let $\mathcal{F}^{z,w}(G)$ be the space of $(z,w)$-quasiperiodic functions
  \begin{equation*}
    \mathcal{F}^{z,w}(G)=\left\{ f:\ZZ^2 \rightarrow \CC^{V(G)}\ :\ \forall (x,y)\in\ZZ^2,\ f(x,y)=z^{-x} w^{-y} f(0,0)\right\}.
  \end{equation*}
  A function of this space is entirely determined by its value on a fundamental domain. This space is thus naturally in bijection with $\CC^{V(G_1)}$ by the following projection:
  \begin{align*}
    \pi: \mathcal{F}^{z,w}(G) &\rightarrow \CC^{V(G_1)}\\
    f &\mapsto f(0,0)
  \end{align*}

  Since $\Delta$ is translation invariant, we have for any function $f\in\mathcal{F}^{z,w}(G)$ that
  \begin{equation*}
    \forall (x,y)\in\ZZ^2,\ \forall v\in V(G_1),\quad \bigl(\Delta^G f\bigr)(x,y)_v= z^{-x} w^{-y} \left(\widehat{\Delta^G}(z,w)f(0,0)\right)_v.
    \end{equation*}
    $\widehat{\Delta^G}(z,w)$ can thus be interpreted as the matrix of the restriction of $\Delta^G$ to $\mathcal{F}^{z,w}(G)$ in its canonical basis.

    By \cite{Kenyon3,Mercat}, the \emph{discrete exponential function} on $G$ with parameter $\lambda\in\CC$ is harmonic (\emph{i.e} in the kernel of $\Delta$) and $(z(\lambda),w(\lambda))$-periodic for     
    \begin{equation}
     z(\lambda)=\prod_{\substack{e=\exp(i\alpha)\in E(G^\diamond_1) \\ e \cap \gamma_{1,x}\neq \emptyset}}\frac{\lambda-e^{i\alpha}}{\lambda + e^{i\alpha} },\quad
    w(\lambda)=\prod_{\substack{e=\exp(i\alpha)\in E(G^\diamond_1) \\ e \cap \gamma_{1,y}\neq \emptyset}}\frac{\lambda-e^{i\alpha}}{\lambda + e^{i\alpha} }.
    \label{eq:paramPdelta}
  \end{equation}

    The projection of this exponential function on $G_1$ is a vector in the kernel of the matrix $\widehat{\Delta^G}(z(\lambda),w(\lambda))$. Thus $(z(\lambda),w(\lambda))$ is a point of the spectral curve. The Laplacian spectral curve is Harnack, and thus irreducible. Therefore $\lambda\mapsto \left(z(\lambda),w(\lambda) \right)$ gives a parametrization of the whole curve, except maybe some isolated points.

  But for any $\lambda\in\CC$, we can also construct a $(z(\lambda),w(\lambda))$-periodic function $f_\lambda$ on vertices of $\GD$ such that
  \begin{equation}
    \forall v\in V(\GD),\quad K f_\lambda (v) =0.
  \end{equation}
  See \cite{isoising2} for the details of the construction.
  Therefore, the matrix $\widehat{K}(z(\lambda),w(\lambda))$ is not invertible, and $(z(\lambda),w(\lambda))$ is a point in the spectral curve $\{(z,w)\in \CC^2: P(z,w)=0\}$. This shows that $P_{\Delta}$ is a divisor of the characteristic polynomial $P$.   
\end{proof}

We now have to show that the quotient of these two polynomials is a constant. The idea is to compare the degrees. 
Recall that the Newton polygon $N(P)$ of a polynomial $P(z,w)$ is defined as the convex hull in $\RR^2$ of 
\begin{equation*}
  \left\{ (x,y)\in\ZZ^2 : z^x w^y\text{ is a monomial in }P(z,w)\right\}.
\end{equation*}
We show that the following inclusion holds:
\begin{lem}
  \label{lem:incluNewton}
  \begin{equation*}
    N(P) \subset N(P_{\Delta}).
  \end{equation*}
\end{lem}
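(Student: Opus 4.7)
Proof plan: I would describe both Newton polygons combinatorially in terms of homological data on $G_1$, exploiting the cycle-rooted spanning forest expansion of $P_\Delta$ on one side and a permutation/cycle-cover expansion of $P$ on the other, with Fisher's correspondence bridging the two.

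Starting with the Laplacian side: by Lemma~\ref{lem:detDelta_tree}, expanding each factor
\begin{equation*}
\left\lvert\sqrt{z^{x_T}w^{y_T}}-\sqrt{z^{-x_T}w^{-y_T}}\right\rvert^2 = 2 - z^{x_T}w^{y_T} - z^{-x_T}w^{-y_T}
\end{equation*}
in the formula for $P_\Delta(z,w)$ shows that every monomial of $P_\Delta$ has an exponent of the form $\sum_{T \in F}\epsilon_T(x_T,y_T)$ for some CRSF $F\in\mathcal{F}$ and signs $\epsilon_T \in \{-1,0,+1\}$, where $(x_T,y_T)$ is the homology class of the unique non-trivial cycle of the tree $T$. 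Consequently, $N(P_\Delta)$ is the convex hull of the set of all such signed sums.

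On the Kasteleyn side, I would expand $P(z,w)=\det\widehat K(z,w)$ as a signed sum over permutations of $V(\GD_1)$. Every non-zero term corresponds to a directed cycle cover of $\GD_1$ supported on edges of $\GD$, and it produces a monomial whose $(z,w)$-exponent is the total signed winding of the cover around the torus. Via Fisher's construction, such a cover projects onto an oriented subgraph $\Gamma$ of $G_1$ consisting of the edges of $G_1$ whose corresponding inter-decoration edges of $\GD$ appear in the cover. At each decoration, the cycles of the cover enter and leave the same number of times, so $\Gamma$ has even degree at every vertex of $G_1$; since edges internal to decorations are null-homologous on the torus, the winding of the cover coincides with the signed winding of $\Gamma$ in $G_1$.

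The last—and I expect most delicate—step is to realize the winding of $\Gamma$ as an exponent lying in $N(P_\Delta)$, i.e.\ as a signed sum of cycle homologies of some CRSF. Decomposing $\Gamma$ into edge-disjoint simple oriented cycles and using that disjoint non-contractible cycles on the torus are homologically parallel, the winding of $\Gamma$ is necessarily a signed integer multiple of a primitive class $(x,y)$. The subtlety is that cycles of a CRSF are required to be \emph{vertex}-disjoint (one per connected tree component), whereas those extracted from $\Gamma$ are only \emph{edge}-disjoint; however, by grouping parallel strands of $\Gamma$ and observing that only the signed total homology contributes, one can always construct a CRSF $F$ of $G_1$ whose signed cycle homology sum matches the winding of $\Gamma$. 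This places every exponent of $P$ inside $N(P_\Delta)$, yielding the desired inclusion $N(P)\subset N(P_\Delta)$.
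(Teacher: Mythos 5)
Your overall architecture (cycle-cover expansion of $\det\widehat{K}(z,w)$, projection to $G_1$, comparison with the cycle-rooted spanning forest expansion of $P_\Delta$) is the same as the paper's, but the step you flag as "most delicate" is precisely where the real work lies, and your proposal does not supply it. Projecting an arbitrary contributing cycle cover of $\GD_1$ to $G_1$ gives closed curves that are only edge-disjoint: several cycles can visit the same decoration and hence pass through the same vertex of $G_1$. To seed a cycle-rooted spanning forest you need genuinely \emph{vertex}-disjoint simple non-trivial cycles of $G_1$ (one per tree), and ``grouping parallel strands'' is not a construction: two cycles of the \emph{same} homology class through a common vertex of $G_1$ can neither be rerouted apart in general (the graph need not contain alternative disjoint loops of that class) nor concatenated (the result is not a simple cycle, so it cannot be the unique cycle of a tree). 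The paper's Lemma~\ref{lem:dimer_niloops} is exactly this missing step, and its proof is not soft: it first uses the determinant structure, via the triangle-rerouting involution around decorations, to cancel in pairs all configurations in which some decoration is visited in both rotational directions --- this is where the hypothesis that the monomial actually appears in $P$ (nonzero coefficient) is used, to guarantee a surviving ``good'' configuration --- and only then observes that two non-trivial cycles sharing a decoration must have \emph{opposite} homology, so they can be erased without changing the total homology; after these reductions each decoration carries at most one non-trivial cycle and can be retracted, yielding the disjoint loops on $G_1$. Your proposal has no substitute for this cancellation-plus-erasure argument.

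There is a second, more easily repaired, gap on the Laplacian side: expanding the factors in \eqref{eq:weightforest} shows only that every monomial of $P_\Delta$ has exponent of the form $\sum_T \epsilon_T(x_T,y_T)$, i.e. that $N(P_\Delta)$ is \emph{contained} in the convex hull of such signed sums; you assert equality and then use the reverse inclusion. To conclude that an exponent of $P$ lies in $N(P_\Delta)$ you must rule out accidental cancellation among forest contributions. The paper does this by working only with the \emph{extremal} points $(x_0,y_0)$ of $N(P)$, observing that any forest $F'$ whose weight contributes that monomial has degree set $D_{F'}\supset D_F$ and controlling signs, and then finishing by convexity of the Newton polygon; some argument of this kind (e.g. that the maximal multiple of the primitive class $(x,y)$ on the ray $\ZZ(x,y)$ receives contributions of a single sign, together with symmetry under $(z,w)\mapsto(z^{-1},w^{-1})$ and convexity) must be added to your write-up.
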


A key ingredient to the proof of Lemma~\ref{lem:incluNewton} is the following:

\begin{lem}
  \label{lem:dimer_niloops}
  For every monomial of $P(z,w)=\det\bigl(\widehat{K}(z,w)\bigr)$, one can construct a configuration of non intersecting, non trivial oriented loops on the toroidal isoradial graph $G_1$, with homology equal to the degree of the monomial.
\end{lem}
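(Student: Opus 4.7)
The plan is to exploit the classical cycle-cover expansion of a determinant together with the planar combinatorics of Fisher's decoration. Expanding $\det\widehat{K}(z,w)$ as a signed sum over permutations $\sigma$ of $V(\GD_1)$ enriched with edge-lifts in $\GD$, each non-zero term corresponds to a cycle cover $\Sigma$ of the toroidal Fisher graph $\GD_1$: a vertex-disjoint family of cycles consisting of $2$-cycles (matching edges) together with longer oriented cycles $C$, each of the latter carrying a homology class $(a_C,b_C)\in\ZZ^2$ on the torus. Two-cycles always contribute trivial homology, while the $(z,w)$-weight of the term equals $\prod_C z^{a_C} w^{b_C}$. For a given monomial $z^a w^b$ of $P$, one picks any cycle cover $\Sigma$ contributing to it, so that $\sum_C (a_C,b_C) = (a,b)$.

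Next, project $\Sigma$ to the isoradial graph $G_1$ by keeping, for each oriented cycle $C$ of $\Sigma$ of length at least $3$, the exterior edges it traverses (i.e., those inherited from edges of $G$), in their cyclic order and oriented by the direction of $C$. Since any two consecutive exterior edges of $C$ are joined by an internal path lying inside a single Fisher decoration, they are incident to the same vertex of $G_1$, so the projection $\pi(C)$ is a closed walk on $G_1$. The walks $\{\pi(C)\}_C$ are pairwise edge-disjoint because each exterior edge of $\GD_1$ belongs to at most one cycle of $\Sigma$, and their total homology is exactly $\sum_C(a_C,b_C) = (a,b)$, since internal paths and $2$-cycles contribute no shift across fundamental domains.

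The heart of the argument is to check that these edge-disjoint closed walks assemble into \emph{non-intersecting} oriented loops on the planar embedding of $G_1$. This is a local statement at each vertex $v\in V(G_1)$: the restriction of $\Sigma$ to the decoration $D_v$ is a partial cycle cover whose open pieces are pairwise vertex-disjoint paths matching the used outer vertices of $D_v$ in pairs. Since $D_v$ is a planar subgraph whose outer vertices lie on its boundary in the cyclic order of the edges of $G_1$ incident to $v$, and the inner paths are vertex-disjoint, the induced pairing of the used incident edges at $v$ is automatically non-crossing with respect to this cyclic order. Following the non-crossing pairings vertex by vertex decomposes the walks into simple, oriented, non-crossing loops drawn on $G_1$; discarding any null-homologous loops leaves a configuration of non-intersecting, non-trivial oriented loops of total homology $(a,b)$.

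The main obstacle is precisely the local non-crossing verification at each vertex, which relies crucially on the specific planar design of Fisher's decoration (one triangle per incident edge, triangles linked in the cyclic order of the edges of $G$ around the vertex). This combinatorial rigidity is what rules out crossing pairings of outer vertices. Once this local planarity is secured, the global assembly of non-intersecting loops and the matching of the homology class are essentially automatic.
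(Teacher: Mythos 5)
There is a genuine gap, and it sits exactly where you declare the argument to be ``essentially automatic''. Your local planarity observation is correct as far as it goes: vertex-disjoint paths inside a decoration, drawn in a disk with the legs on its boundary, do induce a non-crossing pairing of the incident edges of $G_1$ at that vertex. But non-crossing is not the same as non-intersecting. Whenever two cycles of the cover $\Sigma$ visit the \emph{same} decoration (or one cycle visits it twice), the retracted loops all pass through the corresponding vertex of $G_1$: they are non-crossing curves, but they are not disjoint loops on the graph. The lemma (and its use in Lemma~\ref{lem:incluNewton}, where each loop must be the unique cycle of a distinct tree of a cycle-rooted spanning forest) requires genuinely vertex-disjoint loops. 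Worse, this cannot be repaired configuration by configuration: the determinant expansion does contain terms in which two non-trivial cycles of the same homology class visit one decoration in opposite directions; for such a configuration there is no way to erase, merge or reroute the two projected loops within $G_1$ so as to obtain disjoint non-trivial loops with the prescribed total homology. So the statement you are implicitly proving --- ``every contributing cycle cover projects to an admissible loop configuration'' --- is false, and ``one picks any cycle cover $\Sigma$ contributing to the monomial'' is not a legitimate starting point.

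What is missing is precisely the algebraic part of the paper's proof: one must use cancellations inside $\det\bigl(\widehat{K}(z,w)\bigr)$ to \emph{select} a good configuration for each surviving monomial. The paper first discards cycles around the small triangles (odd trivial cycles cancel by skew-Hermitianity), then pairs every configuration in which some decoration is visited in both directions with a rerouted configuration of exactly opposite weight; the sign comes from the Kasteleyn condition $\widehat{K}_{u_1,u_2}\widehat{K}_{u_2,u_3}\widehat{K}_{u_3,u_1}=-1$ around a triangle, and the move switches the doubled edges filling the sector between the two cycles. Hence any monomial actually present in $P(z,w)$ admits a contributing configuration in which all visits to each decoration have the same direction. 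Only then can one argue that two non-trivial cycles sharing a decoration must have opposite homology, erase such pairs without changing the total degree, and finally retract the decorations to obtain non-intersecting non-trivial loops on $G_1$. Your proposal contains none of this cancellation/erasure mechanism, so as written it does not prove the lemma; the purely topological projection argument only yields non-crossing walks that may share vertices, which is too weak both for the statement and for its application.
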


Before proving Lemma~\ref{lem:dimer_niloops}, let us end the proof of Lemma~\ref{lem:incluNewton}.

\begin{proof}[Proof of Lemma~\ref{lem:incluNewton}]
Take an extremal point $(x_0,y_0)$ of the Newton polygon of $P(z,w)$, then $z^{x_0}w^{y_0}$ is a monomial of $P(z,w)$. Construct a family of $n$ non trivial loops according to Lemma \ref{lem:dimer_niloops}.
Then each loop has homology $\pm(x,y)=\pm\left(\frac{x_0}{d},\frac{y_0}{d}\right)$, where $d=\gcd(x_0,y_0)$. From this set of loops, one can grow a cycle-rooted spanning forest $F$ by gluing new edges step by step. This forest has $n$ trees, and the weight of the forest, computed as in \eqref{eq:weightforest} is a polynomial in $z$ and $w$, with monomials of degree in
\begin{equation}
  D_F=\left\{-n,\dots, n\right\}(x,y).
  \label{eq:convhull_deg}
\end{equation}

The set of possible degrees $D_F$ in the weight of a spanning forest constructed from this family of loops contains $(x_0,y_0)$. Note that the terms with extremal degree come with positive coefficients. It follows that the whole set $D_F$ is included in the Newton polygon of $N(P_{\Delta})$. It might happen that the monomials corresponding to the extremal points of $D_F$ are canceled ``accidentally'' by the contributions of other forests $F'$, but the corresponding monomials in these contributions have to come with a negative coefficient, implying that the extremal degree of these terms is larger, and the corresponding set of possible degrees $D_{F'}$ contains $D_F$.
Thus the extremal point $(x_0,y_0)$, as an element of $D_F$, belongs to $N(P_{\Delta})$.

Since $N(P_{\Delta})$ contains all the extremal points of $N(P)$, it contains all the Newton polygon $N(P)$ by convexity. 
\end{proof}

\begin{proof}[Proof of Lemma \ref{lem:dimer_niloops}]
 In order to construct the family of loops on $G_1$, we need some combinatorial description of $\det\bigl(\widehat{K}(z,w)\bigr)$.  The matrix $\widehat{K}(z,w)$ is not skew-symmetric, but only skew-Hermitian, because of the complex entries $z$ and $w$. Thus, the determinant of $\widehat{K}(z,w)$ is not the square of a combinatorial sum over dimer configurations, that would correspond to the superposition of two independent dimer configurations.
However, we can still expand the determinant as a sum over permutations, and group terms according to the support of their cycles.

\begin{multline}
  \label{eq:devdetK}
  \det \bigl(\widehat{K}(z,w)\bigr) = \sum_{\sigma\in \mathfrak{S}(\GD_1)} \mathrm{sgn}(\sigma) \prod_{v\in V(\GD_1)} \bigl(\widehat{K}(z,w)\bigr)_{v,\sigma v} =\\
  \sum_{\substack{(\Gamma_\l) \text{ partition} \\ \text{of } V(\GD_1)}} \prod_{\l}(-1)^{|\Gamma_\l|-1} \sum_{\substack{\gamma\text{ cycle}\\ \mathrm{supp} \gamma= \Gamma_\l}} \prod_{v\in\Gamma_\l} \bigl(\widehat{K}(z,w)\bigr)_{v,\gamma v}
\end{multline}

One can make a few remarks about this expression:
\begin{enumerate}
  \item Since $\widehat{K}(z,w)$ is an adjacency matrix of $\GD_1$, the only non zero contributions to this sum will come from partitions of $\GD_1$ into disjoint circuits or doubled edges.
  \item If $\Gamma_\l$ is a circuit, the sum over cycles with support $\Gamma_\l$ contains two terms corresponding to the natural cycle $\gamma$ sending a vertex to its successor on the circuit, and its inverse $\gamma^{-1}$.
    When $\Gamma_\l$ is a doubled edge, the sum contains a single term, representing the transposition of the two extremities, which is equal to its inverse.
  \item The degree of $(z,w)$ of the product $\prod_{v\in\Gamma_\l}  \bigl(K(z,w)\bigr)_{v,\gamma v}$ gives the homology in $\mathbb{Z}^2$ of the oriented cycle $\gamma$ around the torus on which $\GD_1$ is drawn. Two cycles $\gamma$ and $\gamma^{-1}$ have opposite homology.
  \item  Because $\widehat{K}(z,w)$ is skew-Hermitian, the coefficients of the monomials for $\gamma$ and $\gamma^{-1}$ will be equal if $|\Gamma_\l|$ is even, and are negative of each other if $|\Gamma_\l|$ is odd. 
  \item It follows from the two previous remarks that trivial loops of odd length do not contribute (the terms from $\gamma$ and $\gamma^{-1}$ cancel each other). Recall that on $\GD_1$ there are triangles around each decoration, which are trivial cycles of odd length. We will for the moment only cancel out the contributions of configurations containing a cycle around a small triangle of a decoration.
  \item The weight corresponding to a transposition is a positive number.
  \item Since they cannot cross, cycles of non trivial homology are parallel: their homology class in $\ZZ^2$ are equal or opposite to one another.
\end{enumerate}

We say that a cycle on $\GD_1$ \emph{visits} a decoration if the sequence of its oriented edges contains successively:
\begin{itemize}
  \item a \emph{long edge} (\emph{i.e.} an edge coming from $G_1$) oriented toward the decoration,
  \item a sequence of distinct edges of the decoration,
  \item another long edge attached to the decoration, but oriented outward the decoration.
  \end{itemize}
A cycle can visit a decoration in two possible directions (clockwise or counterclockwise), depending on which sector of the decoration is taken to connect the two long edges. A cycle can visit several times the same decoration, but always in the same direction.

Take a monomial $a_{x,y}z^x w^y$ of $P(z,w)=\det \widehat{K}(z,w)$. Consider a configuration of cycles on $\GD_1$ contributing to this monomial in the expansion of the determinant \eqref{eq:devdetK}. 

Consider the set of cycles visiting a particular decoration. If these cycles do not all visit the decoration in the same direction, we can construct just by elementary moves around the decoration a cycle configuration also contributing to the same monomial, but with a weight equal to the exact negative of the weight of the initial configuration. Pairing these configurations gives thus a zero contribution to \eqref{eq:devdetK}. 

The construction is done as follows: since not all the cycles visit the decoration in the same direction, then one can find two neighboring cycles with opposite direction. The sector of the decoration separating the two loops is filled with doubled edges with weight $1$, since we already threw away configurations with cycles around triangles. Now reroute the cycles around the triangles, and switch all the doubled edges (see Figure~\ref{fig:anihiloop}). The two cycles either increased or decreased their length by 1 unit, preserving the global signature. The condition on the Kasteleyn orientation implies that the product 
  \begin{equation*}
    \widehat{K}(u_1,u_2) \widehat{K}(u_2,u_3) \widehat{K}(u_3,u_1) = -1,
\end{equation*}
where $u_1,u_2,u_3$ are vertices of the same small triangle. Consequently, when the cycles are rerouted, the product of the entries of the matrix $\widehat{K}$ for one of the two cycles will stay the same, whereas the one for the other cycle will be changed in its negative. Therefore the total weight of this new cycle configuration is exactly the negative of the weight of the initial configuration we started with.

\begin{figure}
  \begin{center}
    \includegraphics[height=4cm]{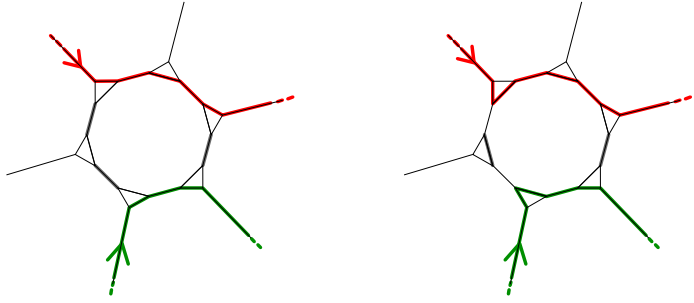}
    \caption{A configuration of two cycles visiting a decoration in different directions: in the initial state (left) and after rerouting (right).}
  \label{fig:anihiloop}
  \end{center}
\end{figure}

Having paired configurations of cycles with annihilating weights, we are left with configurations where every decoration is visited by cycles in the same direction. If there is a decoration visited by two non trivial cycles (there cannot be more, for obvious topological reasons), then they must be parallel.  If they have the same homology class in $\ZZ^2$, they visit the decoration with different orientations. Therefore, they have opposite homology, and thus they do not contribute to the homology class of the cycle configuration. We can erase these two non trivial loops, and search for other decorations on which we can reiterate this operation.

Once that around every decoration,  there is at most one non trivial cycle, one can throw away all the trivial ones as well as the double edges, and retract every decoration to a single vertex. This would lead to a configurations of non intersecting loops on $G_1$ with homology $(x,y)$, which is exactly what we wanted to construct.

\end{proof}

%

%
%

\bibliographystyle{alpha}
\bibliography{ising}

\begin{thebibliography}{{Fis}66}

\bibitem[CR07]{CimaReshe1}
David Cimasoni and Nicolai Reshetikhin.
\newblock Dimers on surface graphs and spin structures. {I}.
\newblock {\em Comm. Math. Phys.}, 275(1):187--208, 2007.

\bibitem[CR08]{CimaReshe2}
David Cimasoni and Nicolai Reshetikhin.
\newblock Dimers on surface graphs and spin structures. {II}.
\newblock {\em Comm. Math. Phys.}, 281(2):445--468, 2008.

\bibitem[dT07]{Bea1}
B{\'e}atrice de~Tili{\`e}re.
\newblock Quadri-tilings of the plane.
\newblock {\em Probab. Theory Related Fields}, 137(3-4):487--518, 2007.

\bibitem[{Fis}66]{Fisher}
M.~E. {Fisher}.
\newblock {On the Dimer Solution of Planar Ising Models}.
\newblock {\em Journal of Mathematical Physics}, 7:1776--1781, October 1966.

\bibitem[Kas67]{Kasteleyn}
P.~W. Kasteleyn.
\newblock Graph theory and crystal physics.
\newblock In {\em Graph {T}heory and {T}heoretical {P}hysics}, pages 43--110.
  Academic Press, London, 1967.

\bibitem[Ken02]{Kenyon3}
Richard Kenyon.
\newblock {The {L}aplacian and {D}irac operators on critical planar graphs}.
\newblock {\em Invent. Math.}, 150(2):409--439, 2002.

\bibitem[KS05]{KeSchlenk}
Richard Kenyon and Jean-Marc Schlenker.
\newblock Rhombic embeddings of planar quad-graphs.
\newblock {\em Trans. Amer. Math. Soc.}, 357(9):3443--3458 (electronic), 2005.

\bibitem[Kup98]{Kuperberg}
Greg Kuperberg.
\newblock An exploration of the permanent-determinant method.
\newblock {\em Electron. J. Combin.}, 5:Research Paper 46, 34 pp. (electronic),
  1998.

\end{thebibliography}


\newcommand{\etalchar}[1]{$^{#1}$}
\begin{thebibliography}{DZM{\etalchar{+}}96}

\bibitem[Bax89]{Baxter}
Rodney~J. Baxter.
\newblock {\em Exactly solved models in statistical mechanics}.
\newblock Academic Press Inc. [Harcourt Brace Jovanovich Publishers], London,
  1989.
\newblock Reprint of the 1982 original.

\bibitem[BdT09]{isoising2}
C.~Boutillier and B.~de~Tili\`ere.
\newblock The critical {$Z$}-invariant ising model via dimers: locality
  properties.
\newblock {\em To appear}, 2009.

\bibitem[Big93]{Biggs}
Norman Biggs.
\newblock {\em Algebraic graph theory}.
\newblock Cambridge Mathematical Library. Cambridge University Press,
  Cambridge, second edition, 1993.

\bibitem[Bou70]{Bourbaki:alg}
N.~Bourbaki.
\newblock {\em \'{E}l\'ements de math\'ematique. {A}lg\`ebre. {C}hapitres 1 \`a
  3}.
\newblock Hermann, Paris, 1970.

\bibitem[CKP01]{CKP}
Henry Cohn, Richard Kenyon, and James Propp.
\newblock {A variational principle for domino tilings}.
\newblock {\em J. Amer. Math. Soc.}, 14(2):297--346 (electronic), 2001.

\bibitem[CR07]{CimaReshe1}
David Cimasoni and Nicolai Reshetikhin.
\newblock Dimers on surface graphs and spin structures. {I}.
\newblock {\em Comm. Math. Phys.}, 275(1):187--208, 2007.

\bibitem[CS06]{CostaSantos}
R.~Costa-Santos.
\newblock Geometrical aspects of the {$Z$}-invariant ising model.
\newblock {\em The European Physical Journal B}, 53(1):85--90, 2006.

\bibitem[dT07]{Bea1}
B{\'e}atrice de~Tili{\`e}re.
\newblock Quadri-tilings of the plane.
\newblock {\em Probab. Theory Related Fields}, 137(3-4):487--518, 2007.

\bibitem[DZM{\etalchar{+}}96]{Russes}
N.~P. Dolbilin, Yu.~M. Zinov{$'$}ev, A.~S. Mishchenko, M.~A. Shtan{$'$}ko, and
  M.~I. Shtogrin.
\newblock {Homological properties of two-dimensional coverings of lattices on
  surfaces}.
\newblock {\em Funktsional. Anal. i Prilozhen.}, 30(3):19--33, 95, 1996.

\bibitem[{Fis}66]{Fisher}
M.~E. {Fisher}.
\newblock {On the Dimer Solution of Planar Ising Models}.
\newblock {\em Journal of Mathematical Physics}, 7:1776--1781, October 1966.

\bibitem[{Kas}61]{Kast61}
P.~W. {Kasteleyn}.
\newblock {The statistics of dimers on a lattice : I. The number of dimer
  arrangements on a quadratic lattice}.
\newblock {\em Physica}, 27:1209--1225, December 1961.

\bibitem[Kas67]{Kasteleyn}
P.~W. Kasteleyn.
\newblock Graph theory and crystal physics.
\newblock In {\em Graph {T}heory and {T}heoretical {P}hysics}, pages 43--110.
  Academic Press, London, 1967.

\bibitem[Ken97]{Ke:LocStat}
Richard Kenyon.
\newblock {Local statistics of lattice dimers}.
\newblock {\em Ann. Inst. H. Poincar\'e Probab. Statist.}, 33(5):591--618,
  1997.

\bibitem[Ken02]{Kenyon3}
Richard Kenyon.
\newblock {The {L}aplacian and {D}irac operators on critical planar graphs}.
\newblock {\em Invent. Math.}, 150(2):409--439, 2002.

\bibitem[{Kir}47]{Kirchhoff:1847}
G.~{Kirchhoff}.
\newblock {Ueber die Aufl{\"o}sung der Gleichungen, auf welche man bei der
  Untersuchung der linearen Vertheilung galvanischer Str{\"o}me gef{\"u}hrt
  wird}.
\newblock {\em Annalen der Physik}, 148:497--508, 1847.

\bibitem[KO06]{KO}
Richard Kenyon and Andrei Okounkov.
\newblock Planar dimers and {H}arnack curves.
\newblock {\em Duke Math. J.}, 131(3):499--524, 2006.

\bibitem[KOS06]{KOS}
Richard Kenyon, Andrei Okounkov, and Scott Sheffield.
\newblock Dimers and amoebae.
\newblock {\em Ann. of Math. (2)}, 163(3):1019--1056, 2006.

\bibitem[KS05]{KeSchlenk}
Richard Kenyon and Jean-Marc Schlenker.
\newblock Rhombic embeddings of planar quad-graphs.
\newblock {\em Trans. Amer. Math. Soc.}, 357(9):3443--3458 (electronic), 2005.

\bibitem[KW41]{KramersWannier}
H.~A. Kramers and G.~H. Wannier.
\newblock Statistics of the two-dimensional ferromagnet. {I}.
\newblock {\em Phys. Rev. (2)}, 60:252--262, 1941.

\bibitem[Mer01]{Mercat}
Christian Mercat.
\newblock {Discrete {R}iemann surfaces and the {I}sing model}.
\newblock {\em Comm. Math. Phys.}, 218(1):177--216, 2001.

\bibitem[MW73]{McCoyWu}
B.~McCoy and F.~Wu.
\newblock {\em {The two-dimensional Ising model}}.
\newblock Harvard Univ. Press, 1973.

\bibitem[RG57]{GradsteinRyshik}
I.~M. Ryshik and I.~S. Grad{\v{s}}te{\u\i}n.
\newblock {\em Summen-, {P}rodukt- und {I}ntegral-tafeln}.
\newblock VEB Deutscher Verlag der Wissenschaften, Berlin, 1957.

\bibitem[She05]{Sheffield}
Scott Sheffield.
\newblock Random surfaces.
\newblock {\em Ast\'erisque}, (304):vi+175, 2005.

\bibitem[Tes00]{Tesler}
Glenn Tesler.
\newblock {Matchings in graphs on non-orientable surfaces}.
\newblock {\em J. Combin. Theory Ser. B}, 78(2):198--231, 2000.

\bibitem[Wan45]{Wannier}
G.~H. Wannier.
\newblock The statistical problem in cooperative phenomena.
\newblock {\em Rev. Mod. Phys.}, 17(1):50--60, Jan 1945.

\end{thebibliography}

\end{document}